\title[]{$\mbox{\boldmath{$C^1$}}$-umbilics with arbitrarily high indices}
\date{\today}
\theoremstyle{plain}
 \newtheorem{theorem}{Theorem}[section]
 \newtheorem*{theorem*}{Theorem}
 \newtheorem*{lemma*}{Lemma}
 \newtheorem{proposition}[theorem]{Proposition}
 \newtheorem{fact}[theorem]{Fact}
 \newtheorem*{fact*}{Fact}
 \newtheorem{lemma}[theorem]{Lemma}
 \newtheorem{corollary}[theorem]{Corollary}
\theoremstyle{remark}
 \newtheorem*{remark*}{Remark}
 \newtheorem*{ack}{Acknowledgements}
 \newtheorem{example}[theorem]{Example}
\numberwithin{equation}{section}
\newcommand{\Z}{\boldsymbol{Z}}
\newcommand{\R}{\boldsymbol{R}}
\newcommand{\C}{\boldsymbol{C}}
\renewcommand{\phi}{\varphi}
\renewcommand{\epsilon}{\varepsilon}
\newcommand{\op}{\operatorname}
\newcommand{\mb}[1]{{\mathbf #1}}
\newcommand{\pmt}[1]{{\begin{pmatrix} #1  \end{pmatrix}}}
\newcommand{\vmt}[1]{{\begin{vmatrix} #1  \end{vmatrix}}}
\newcommand{\dy}{\displaystyle}
\author{Naoya Ando}
\address[Ando]{%
Graduate School of Science and Technology, Kumamoto University
2-39-1 Kurokami, Kumamoto 860-8555 Japan
}
\email{ando@sci.kumamoto-u.ac.jp}
\author{Toshifumi Fujiyama}
\address[Fujiyama]{%
Department of Mathematical Informatics, 
Graduate School of Information Science and Technology, University
of Tokyo}
\email{toshifumi\_fujiyama@mist.i.u-tokyo.ac.jp
}
\author{Masaaki Umehara}
\address[Umehara]{%
Department of Mathematical and Computing Sciences
Tokyo Institute of Technology
2-12-1-W8-34, O-okayama Meguro-ku
Tokyo 152-8552 Japan
}
\email{umehara@is.titech.ac.jp}
\subjclass[2010]{%
Primary:53A05; 
53C45;   
Secondary: 
57R42; 
37C10;   
53A30 
}
\thanks{
The first author was partly supported by the Grant-in-Aid for Young
Scientists (B) 24740048, Japan Society for the Promotion of Science.
The third author was partly supported by the Grant-in-Aid for
Scientific Research (A) 262457005, Japan Society for the Promotion of
Science.
}
\begin{document}
\begin{abstract}
In this paper, the existence of $C^1$-umbilics 
with arbitrarily high indices is shown.
This implies that 
more than $C^1$-regularity
is required to prove Loewner's conjecture. 
\end{abstract}
\maketitle

\section{Introduction}

The {\it 
index} of an isolated umbilic  on a given regular 
surface is the index of the curvature line flow
of the surface at that point, which takes values
in the set of half-integers.
{\it Loewner's conjecture} asserts that any isolated 
umbilic on an immersed surface must have index at most $1$.
{\it Carath\'eodory's conjecture} asserts 
the existence of at least two umbilics on an  
immersed sphere in $\R^3$, which 
follows immediately from Loewner's conjecture.
Although this problem was investigated mainly on 
real-analytic surfaces, 
several geometers recently became interested in
non-analytic cases (cf. \cite{A3,B,GH,GMS,SX1}).
In particular, 
Smyth-Xavier \cite{SX1} observed that
Enneper's minimal surface is inverted 
to a branched sphere 
such that the index of
the curvature line flow
at the branch point 
is equal to two.
Bates \cite{B}
found that the graph of
the function
\begin{equation}\label{eq:B}
B(x,y):=2+\frac{x y}{\sqrt{1+x^2}\sqrt{1+y^2}}
\end{equation}
has no umbilics on $\R^2$
and inversion of it
gives a genus zero surface
without self-intersections, which
is differentiable at the image of
infinity under that inversion.
Ghomi-Howard  \cite{GH} gave 
similar examples of genus zero surfaces 
using inversion.
Moreover, they showed that Carath\'eodory's 
conjecture for closed convex surfaces 
can be reduced to the problem of existence of 
umbilics of certain entire graphs over $\R^2$.
A brief history of 
Carath\'eodory's conjecture
and recent developments are written also
in \cite{GH}.
Recently, Guilfoyle-Klingenberg
\cite{GK1} and \cite{GK2}
gave an approach to proving the 
Caratheodory and Loewner
conjecture in the smooth case.

Let $P:U\to \R^3$ be a $C^1$-immersion 
defined on an open subset $U$
of $\R^2$ such that $P$ is $C^\infty$-differentiable
on $U\setminus \{q\}$ and not $C^2$-differentiable
at $q$.
Then the point $q\in U$ is called a
{\it $C^1$-umbilic} if
the umbilics of $P$ on $U\setminus \{q\}$
do not accumulate to $q$.
At that point $q$, we can compute
the index of the curvature line flow of $P$.
In this paper, we prove
the following assertion.

\begin{theorem}\label{thm:C1}
Let $U_1(\subset \R^2)$ be the unit disk
centered at the origin. 
For each positive integer $m$,
there exists a $C^1$-function
$
f:U_1\to \R
$
satisfying the following properties$:$
\begin{enumerate}
\item[{\rm (1)}] $f$ is real-analytic on 
$U_1^*:=U_1\setminus\{(0,0)\}$,
\item[{\rm (2)}] $(0,0,f(0,0))$
is a $C^1$-umbilic of the graph
of $f$ with index $1+(m/2)$.
\end{enumerate}
\end{theorem}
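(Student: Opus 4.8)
The plan is to reduce the computation of the index to a winding number of a single complex function built from the two fundamental forms, and then to realize the required winding by an explicit $f$ whose non-integer radial homogeneity is precisely what keeps it $C^1$ but not $C^2$.

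First I would set $w=x+iy$ and write $f_w=\tfrac12(f_x-if_y)$, $f_{\bar w}=\overline{f_w}$, and so on. Writing the principal-direction equation $(EM_0-FL_0)\,dx^2+(EN_0-GL_0)\,dx\,dy+(FN_0-GM_0)\,dy^2=0$ (where $E,F,G$ are the first fundamental form coefficients and $L_0=f_{xx}$, $M_0=f_{xy}$, $N_0=f_{yy}$) in the form $\tfrac12(P+R)\,|dw|^2+\Re(\Omega\,dw^2)=0$ with $P=EM_0-FL_0$, $R=FN_0-GM_0$, a computation gives
\[
\Omega=2i\,f_{ww}+4i\,f_w\bigl(f_{\bar w}f_{ww}-f_w f_{w\bar w}\bigr).
\]
Since $P+R$ vanishes to higher order than $\Omega$ at an isolated umbilic with horizontal tangent plane, the principal foliation is governed by $\Re(\Omega\,dw^2)=0$, and its Poincar\'e index equals $-\tfrac12\operatorname{wind}_0(\Omega)$, the winding number of $\Omega$ along a small positively oriented circle about $0$. (One calibrates the normalisation on the paraboloid $f=\tfrac12 r^2$, where $\Omega=-\tfrac i2\bar w^2$ has winding $-2$ and the index is $+1$, and on $f=\Re(w^3)$, where $\Omega=6iw$ has winding $+1$ and the index is $-\tfrac12$.) Thus it suffices to produce, for each $m$, a $C^1$ function $f$, real-analytic on $U_1^*$, with $\operatorname{wind}_0(\Omega)=-(m+2)$ and $\Omega\neq0$ on a punctured neighbourhood of $0$; the latter is exactly the statement that the umbilics do not accumulate at $0$.

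Next I would look for $f$ of the form $f=r^{s}\Phi(\theta)$ with $s\in(1,2)$ non-integer and $\Phi(\theta)=\sum_k\hat f_k e^{ik\theta}$ a real trigonometric polynomial, so that $f$ is automatically real-analytic on $U_1^*$, is $C^1$ at $0$ (because $\nabla f=O(r^{s-1})$), and fails to be $C^2$ there (because $\partial^2 f=O(r^{s-2})$ blows up). Here $\nabla f\to0$ forces the first fundamental form to be asymptotically Euclidean, so the leading term of $\Omega$ is $2i f_{ww}$, and an explicit differentiation gives $f_{ww}=r^{s-2}e^{-2i\theta}\Psi(\theta)$ with $\Psi(\theta)=\sum_k\hat f_k\tilde c_k e^{ik\theta}$, where $\tilde c_k=\tfrac14\bigl((u+k)^2-1\bigr)$ and $u:=s-1$. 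Because $s$ is not an integer, every $\tilde c_k$ is nonzero. The task then becomes: choose $\Phi$ so that $\Psi$ is nowhere zero and has winding number $-m$; for then $\operatorname{wind}_0(\Omega)=-2+\operatorname{wind}_0(\Psi)=-(m+2)$, and the index is $1+m/2$.

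The hard part is exactly this last step, and it is where the gap between $C^1$ and $C^2$ opens. Reality of $f$ imposes $d_{-k}=\mu_k\,\overline{d_k}$ on the coefficients $d_k:=\hat f_k\tilde c_k$ of $\Psi$, where $\mu_k=\tilde c_{-k}/\tilde c_k$ satisfies $|\mu_k|<1$ for all $k\ge1$; this systematic bias towards positive winding is the analytic shadow of Loewner's bound, and indeed for integer $s$ two of the $\tilde c_k$ vanish, the corresponding coefficients of $\Psi$ are destroyed, and $f$ is then a homogeneous polynomial whose index one expects cannot exceed $1$. For non-integer $s$ no coefficient is lost, and writing $\Psi(e^{i\theta})=e^{-i(m+1)\theta}Q(e^{i\theta})$ with $Q$ a polynomial of degree $2(m+1)$ carrying the twisted-reality relations, the winding equals $-(m+1)+\#\{\text{zeros of }Q\text{ in }|z|<1\}$. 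The relation $|d_{-(m+1)}/d_{m+1}|=|\mu_{m+1}|<1$ forces at least one zero inside the disk, while the remaining freedom in the $d_k$ lets one place exactly one zero inside and none on the circle, giving $\operatorname{wind}_0(\Psi)=-m$. I expect the principal technical labour to be this root-location argument, together with the verification that $\Psi$ (hence $\Omega$) is nonvanishing on a punctured neighbourhood, so that $0$ is a genuine $C^1$-umbilic of the graph of $f$ with index $1+m/2$.
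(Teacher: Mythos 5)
Your reduction steps are correct, and your route is genuinely different from the paper's main proof (which produces the umbilic by inverting, at infinity, the graph of $1+\tanh(r^a\cos m\theta)$ and controlling the $C^1$-regularity of the inversion); it is closest in spirit to the paper's alternative proof in Section 6, except that you work directly with the curvature line flow of a radially homogeneous graph $f=r^s\Phi(\theta)$ rather than passing through Hessian eigen-flows and Ribaucour's parametrization. The formula $\tilde c_k=\frac14\bigl((u+k)^2-1\bigr)$, the bias $|\mu_k|<1$, the identity $\operatorname{wind}_0(\Omega)=-2+\operatorname{wind}_0(\Psi)$, and the negligibility of the first-fundamental-form corrections for $s\in(1,2)$ all check out. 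The gap is the step you explicitly postpone: producing $\Phi$ with $\Psi$ nowhere zero and $\operatorname{wind}_0(\Psi)=-m$. That step \emph{is} the theorem, and the mechanism you propose for it --- a trigonometric polynomial of degree $m+1$ plus a root-location argument ``using the remaining freedom in the $d_k$'' --- is unproven and very likely unworkable. The relations $d_{-k}=\mu_k\overline{d_k}$ constrain \emph{every} coefficient pair, not only $k=m+1$, so the roots of $Q$ are not free parameters; and your product-of-roots observation only shows $\operatorname{wind}_0(\Psi)\ge -m$ within this ansatz, i.e. that $-m$ is the extremal value --- it gives no way to attain it.

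To see what attaining it requires, note that $4\Psi=-L-2iu\,\Phi'$ with $L:=\Phi''+(1-u^2)\Phi$, and counting signed crossings of the positive real axis (generic critical points, $u>0$) gives
\[
\operatorname{wind}_0(\Psi)=\#\bigl\{\text{maxima of }\Phi:\ L<0\bigr\}-\#\bigl\{\text{minima of }\Phi:\ L<0\bigr\}.
\]
Hence $\operatorname{wind}_0(\Psi)=-m$ forces $\Phi$ to have at least $m$ negative minima with $0\le\Phi''<(1-u^2)|\Phi|$, while essentially all of its maxima must be positive with $|\Phi''|<(1-u^2)\Phi$: every extremum must be flat in the strong sense $|\Phi''|<|\Phi|$, since $1-u^2<1$. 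A profile with $m$ oscillations and this much flatness at every extremum is square-wave-like, and low-degree trigonometric polynomials cannot realize it: profiles dominated by the $(m+1)$-st harmonic have $|\Phi''|\approx(m+1)^2|\Phi|$ at their extrema and give winding $+(m+1)$, and truncating an actual square wave makes things worse because Gibbs oscillations create sharp extrema (for $m=1$, the truncation $\cos\theta-\frac13\cos3\theta$ gives winding $+3$, not $-1$; symmetric degree-$2$ families can be checked to fail identically). What does work --- and is exactly the mechanism the paper encodes in conditions (ii)--(iv) on $F=\tanh$ in Section 5 --- is a profile with \emph{exponentially} flat extrema, e.g. $\Phi(\theta)=\tanh(\Lambda\cos m\theta)$ with $\Lambda$ large, for which $|\Phi''|=\Lambda m^2\operatorname{sech}^2\Lambda\ll(1-u^2)\tanh\Lambda$ at every extremum; then all $m$ minima count, no maximum does, $\Psi$ never vanishes, and $\operatorname{wind}_0(\Psi)=-m$. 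This $\Phi$ is real-analytic, so your analyticity requirement on $U_1^*$ is still met (you never actually need a trigonometric polynomial; a sufficiently high-degree Fourier truncation would also do), and with this substitution your framework does prove the theorem. Without it, or some substitute for this flatness mechanism, your proposal stops exactly where the real difficulty begins.
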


\medskip 
It should be remarked that
the inversion of the graph of 
Bates' function $B(x,y)$
has a differentiable umbilic of index $2$
although not of class $C^1$
(see Example \ref{ex:B0}). 
It was classically known that
curvature line flows
are closely related to
the eigen-flows of the Hessian matrices
of functions (see Appendix A).
As an application of the above result, 
we can show the following: 

\begin{corollary}\label{cor:C1}
For each $m(\ge 1)$,
there exists a $C^1$-function
$
\lambda:U_1\to \R
$
satisfying
\begin{enumerate}
\item $\lambda$ is real-analytic on $U_1^*$, and 
\item the eigen-flow of the
Hessian matrix of $\lambda$ has an
isolated singular point 
$(0,0)$ with index $1+(m/2)$.
\end{enumerate}
\end{corollary}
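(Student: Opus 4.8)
The plan is to deduce the corollary from Theorem~\ref{thm:C1} and the correspondence between curvature line flows and Hessian eigen-flows recorded in Appendix~A. Let $f:U_1\to\R$ be the function given by Theorem~\ref{thm:C1}, so that its graph $M$ has an isolated umbilic of index $1+(m/2)$ at $p_0:=(0,0,f(0,0))$. Choose a rotation $\Phi$ of $\R^3$ carrying the unit normal of $M$ at $p_0$ to the vertical direction. Near $\Phi(p_0)$ the surface $\Phi(M)$ has horizontal tangent plane, so it is a graph over that plane; after a translation and a dilation of $\R^3$ we may assume this graph is defined on $U_1$, and denote its defining function by $\lambda:U_1\to\R$. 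Then $\lambda$ is $C^1$ on $U_1$, real-analytic on $U_1^*$, and $\nabla\lambda(0,0)=\vect{0}$, so property~(1) holds. Since $\Phi$, the translation and the dilation are similarities of $\R^3$, they preserve curvature lines and umbilic indices; hence the graph of $\lambda$ has an umbilic of index $1+(m/2)$ at the origin. It remains to prove that the eigen-flow of $\op{Hess}\lambda$ has index $1+(m/2)$ at $(0,0)$ as well.

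Both indices are one half of a winding number. With $w=x+iy$, the eigendirections of $\op{Hess}\lambda$ are the directions $dw$ for which $\overline{q_H}\,dw^{2}\in\R$, where $q_H:=(\lambda_{xx}-\lambda_{yy})+2i\,\lambda_{xy}$; so the Hessian eigen-flow has index $\tfrac12$ times the winding number of $q_H$ along a small positively oriented circle $\gamma_r=\{|w|=r\}$. Likewise the traceless part of the shape operator $I^{-1}II$ of the graph of $\lambda$, written in the coordinate $w$, defines a complex quantity $q_C$ that vanishes exactly at the umbilics, and the umbilic index equals $\tfrac12$ times the winding number of $q_C$ along $\gamma_r$. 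Because $\nabla\lambda(0,0)=\vect{0}$ we have $I=\mathrm{Id}+O(|\nabla\lambda|^{2})$, hence $I\to\mathrm{Id}$ as $w\to0$; as $II$ is a positive multiple of $\op{Hess}\lambda$, the traceless parts of $I^{-1}II$ and of $\op{Hess}\lambda$ share the same direction in the limit, so with the natural normalization $q_C/q_H\to1$ as $w\to0$. Granting that this convergence is uniform along $\gamma_r$ (addressed below), for $r$ small the linear homotopy $(1-s)q_C+s\,q_H$ is nowhere zero on $\gamma_r$ for every $s\in[0,1]$, whence $q_C$ and $q_H$ have equal winding numbers and the two indices agree.

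The delicate point is precisely this comparison on a \emph{fixed} circle $\gamma_r$: because $\lambda$ is merely $C^1$ at the origin, its second derivatives --- and hence $q_H$ and $q_C$ --- may oscillate or tend to $0$ as $w\to0$, so one must first verify that the origin is an \emph{isolated} singularity of the Hessian eigen-flow, i.e.\ that $q_H\neq0$ on some punctured disk, and then promote $q_C/q_H\to1$ to a uniform estimate along $\gamma_r$. I expect both facts to follow from the explicit normal form of $f$ produced in the proof of Theorem~\ref{thm:C1}, which should make the leading angular behaviour of $q_H$ (and of $q_C$) transparent and thereby both isolate the singularity and pin the common winding number at $2+m$, giving index $1+(m/2)$. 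Once the behaviour at the origin is controlled in this way, the identification of the two indices is routine, and the corollary follows.
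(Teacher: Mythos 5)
Your plan stands or falls on the step you yourself flag as ``delicate,'' and the difficulty there is not a missing uniformity estimate: the principle you invoke is false. Write $I\!I=\operatorname{Hess}\lambda/\sqrt{1+|\nabla\lambda|^2}$ and $I^{-1}=\mathrm{Id}+E$ with $\|E\|=O(|\nabla\lambda|^2)$. The traceless part of the shape operator (symmetrized as in Section~3) then differs from a positive multiple of the traceless part of $\operatorname{Hess}\lambda$ by a term of size $|\nabla\lambda|^2\,\|\operatorname{Hess}\lambda\|$, where $\|\operatorname{Hess}\lambda\|$ is the \emph{full} Hessian norm, trace part included. So your claim $q_C/q_H\to 1$ needs the dominance estimate $|\nabla\lambda|^2\,\|\operatorname{Hess}\lambda\|=o(|q_H|)$, which does not follow from $\nabla\lambda(0,0)=0$. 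Concretely, take $\lambda=(x^2+y^2)/2+\epsilon\operatorname{Re}(z^6)$, $z=x+iy$. Then $\lambda_{zz}=15\epsilon z^4$, so by \eqref{eq:B1} the Hessian eigen-flow has an isolated singular point of index $-2$ at the origin; but the identifiers of Fact \ref{prop:gh} are $d_1=-xy+O(\epsilon r^4)$ and $d_2=x^2-y^2+O(\epsilon r^4)$, so the origin is an isolated umbilic of index $+1$. Here $\nabla\lambda(0,0)=0$ and both singular points are isolated, yet the two indices disagree: the correction term (of size $\sim r^2$, winding like $z^2$) swamps $q_H$ (of size $\sim\epsilon r^4$). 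Hence the homotopy $(1-s)q_C+s\,q_H$ cannot be nonvanishing in general, and for the particular $C^1$-function of Theorem \ref{thm:C1}---whose second derivatives are unbounded near the origin---you have established neither that the origin is an isolated equi-diagonal point of $\operatorname{Hess}\lambda$ nor the required dominance of $q_H$; your appeal to ``the explicit normal form'' is precisely the missing content, not a routine verification.

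This failure is exactly why the paper never compares the graph function's own Hessian with its shape operator. Its proof (Section 5) uses Ribaucour's parametrization (Appendix A): by Fact A.1, the curvature line flow of the graph of $Z_g$ is \emph{identically equal}---not merely asymptotic---to the eigen-flow of the Hessian of a \emph{different} function $\lambda$, given by \eqref{eq:lambdaXY}, expressed in the new coordinates $(\xi,\eta)$ of \eqref{eq:Phi}; the analytic work is then to prove the regularity statements ($\lambda$, $\lambda\nu$ and $\Phi$ are $C^1$ at the origin, via \eqref{eq:limitF} and Proposition \ref{thm:a}), after which the index $1+(m/2)$ is transported exactly, with no limiting argument on a shrinking circle. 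Alternatively, Section 6 verifies the explicit function $\Lambda_m=r^{2}\tanh(r^{-a}\cos m\theta)$ of \eqref{eq:Lambda} directly through the polar Hessian identifiers $\zeta_1,\zeta_2$. To salvage your outline you would have to either adopt one of these routes, or prove the estimate $|\nabla\lambda|^2\,\|\operatorname{Hess}\lambda\|=o(|q_H|)$ along small circles for your specific $\lambda$, together with the isolatedness of its Hessian singularity---a substantial computation that the plan does not contain.
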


When we 
consider the eigen-flow of the 
Hessian matrix of $f$,
it is well-known that 
the index of the flow at an
isolated singular point
is equal to half of the index of the vector field 
\begin{equation}\label{eq:id0}
d_f:=2f_{xy}\frac{\partial}{\partial x}
+(f_{yy}-f_{xx})\frac{\partial}{\partial y}.
\end{equation}
In addition, if $o:=(0, 0)$ is an isolated singular point of the
eigen-flow of the Hessian matrix of $f$, then its index is equal to
$1+{\rm ind}_o (\delta_f )/2$ (see Appendix B), 
where ${\rm ind}_o (\delta_f
)$ is the index of the vector field
\begin{equation}\label{eq:id1}
\delta_f:=2(rf_{r\theta}-f_\theta)
\frac{\partial}{\partial x}+
(-r^2f_{rr}+rf_r+f_{\theta\theta})
\frac{\partial}{\partial y},
\end{equation}
at $o$, and $x=r\cos \theta$, $y=r\sin \theta$.
In order to prove the above theorem,
we introduce vector fields 
$D_f$ and $\Delta_f$
analogous to $d_f$ and
$\delta_f$, respectively
(cf. Propositions \ref{thm:xy} and \ref{thm:rt}),
and prove the theorem by
computing the index of $\Delta_f$ 
at infinity for each of 
the functions (cf. Section 5)
\begin{equation}\label{eq:tanh}
(f=)f_m(r,\theta):=
1+\tanh \left(r^a \cos m \theta\right)
\quad (0<a<1/4,\,\,m=1,2,\dots).
\end{equation}
In addition, we give an alternative proof of Theorem 1.1
without use of inversion, by an explicit example of
$\lambda$ (cf. (6.1)) satisfying (1) and (2) of Corollary 1.2
(see Section 6).

\section{The regularity of the inversion}
Let $R$ be a positive number.
Consider a function
$
f:\R^2\setminus \Omega_R\to \R,
$
where
\begin{equation}\label{eq:Omega}
\Omega_R:=\{(x,y)\in \R^2\,;\, \sqrt{x^2+y^2}\le R\}.
\end{equation}
Then
$
F=\left(x,
y,
f(x,y)\right)
$
gives a parametrization of the graph of $f$.
The inversion of $F$
is given by
$
F/(F\cdot F),
$
where the dot denotes the inner product on $\R^3$.
We consider the following coordinate change
\begin{equation}\label{eq:c-xy}
x=\frac{u}{u^2+v^2},\qquad y=\frac{v}{u^2+v^2}.
\end{equation}
Then
\begin{equation}\label{eq:A}
\Psi_f:=
\frac{1}{\rho^2\hat f^2+1}
(u,v,\rho^2\hat f),\qquad
\hat f(u,v):=f\left(\frac{u}{\rho^2},\frac{v}{\rho^2}\right)
\end{equation}
gives a parametrization of the inversion,
where 
$
\rho:=\sqrt{u^2+v^2}.
$
The map 
$\Psi_f$ 
is defined on the domain
\begin{equation}\label{eq:D}
U_{1/R}^*:=U_{1/R}\setminus\{o\}\quad
\left(
U_{1/R}:=\left\{(u,v)\in \R^2\,;\, \sqrt{u^2+v^2}< \frac1R
\right\}
\right),
\end{equation}
where $o:=(0,0)$.
If we set
\begin{equation}\label{eq:xy}
x=r \cos \theta,\qquad y= r\sin \theta,
\end{equation}
where $r>0$, then
\eqref{eq:c-xy} yields 
\begin{equation}\label{eq:uvxy}
\rho=\frac1{r},\qquad
u=\rho \cos \theta,\qquad v=\rho \sin \theta.
\end{equation}
In particular, the angular parameter 
is common in the $xy$-plane and the $uv$-plane.

\begin{proposition}\label{prop:diff}
Let $f:\R^2\setminus \Omega_R\to \R$
be a $C^\infty$-function
such that $f/r$ is bounded.
Then the inversion
$\Psi_f:U^*_{1/R}\to \R^3$ 
can be continuously extended to $(0,0)$.
Moreover, if 
\begin{equation}\label{eq:fr0}
\left|\frac{f^2 -2rf f_r}{r^2}\right|<1  \qquad (r>R),
\end{equation}
then the image 
of $\Psi_f=(X,Y,Z)$ 
can be locally expressed as 
the graph of a function $Z=Z_f(X,Y)$
on a neighborhood of $(0,0)$
in the $XY$-plane.
Under the assumption \eqref{eq:fr0}, 
the function $Z_f(X,Y)$ is differentiable
if and only if
$$ 
\lim_{r\to \infty}\frac{f}r=0.
$$ 
\end{proposition}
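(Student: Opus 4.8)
The plan is to express everything through the parametrization \eqref{eq:A} in polar form and to reduce the three assertions to the single identity $Z/\sqrt{X^2+Y^2}=f/r$. Writing $\Psi_f=(X,Y,Z)$ and combining \eqref{eq:A} with $\sqrt{u^2+v^2}=\rho$ and \eqref{eq:uvxy}, I would first record
\[
\sqrt{X^2+Y^2}=\frac{\rho}{1+\rho^2\hat f^2}=\frac{r}{f^2+r^2},\qquad
Z=\frac{\rho^2\hat f}{1+\rho^2\hat f^2}=\frac{f}{f^2+r^2},
\]
so that $Z/\sqrt{X^2+Y^2}=\rho\hat f=f/r$, and note that the angular coordinate $\theta$ is shared by $(u,v)$ and $(X,Y)$. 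Since $f/r$ is bounded, $\rho^2\hat f=f/r^2\to0$, whence $X,Y,Z\to0$ as $r\to\infty$ (i.e. $\rho\to0$); this yields the continuous extension $\Psi_f(o)=(0,0,0)$.

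To obtain the local graph representation I would show that the planar projection $(u,v)\mapsto(X,Y)$ is a diffeomorphism of a punctured neighborhood of $o$ onto a punctured neighborhood of $(0,0)$ that extends homeomorphically by sending $o$ to $(0,0)$. Because $\theta$ is preserved, in polar form this projection is $(r,\theta)\mapsto(\Phi,\theta)$ with $\Phi:=\sqrt{X^2+Y^2}=r/(f^2+r^2)$, so it suffices to see that, for each fixed $\theta$, the map $r\mapsto\Phi$ is strictly monotone. Differentiating along a ray gives
\[
\frac{\partial\Phi}{\partial r}=\frac{(f^2-2rff_r)-r^2}{(f^2+r^2)^2},
\]
and this is exactly where \eqref{eq:fr0} enters: the hypothesis forces $f^2-2rff_r<r^2$, so $\partial\Phi/\partial r<0$ for all $r>R$. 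Thus $\Phi$ decreases strictly to $0$, the projection is injective, and its Jacobian does not vanish (in polar coordinates it equals $\tfrac{\Phi}{\rho}\,\partial_\rho\Phi$, with $\partial_\rho\Phi=-r^2\,\partial_r\Phi\neq0$). Hence the image is locally the graph of a well-defined function $Z=Z_f(X,Y)$ with $Z_f(0,0)=0$, and $(X,Y)\to(0,0)$ corresponds precisely to $r\to\infty$.

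For the differentiability criterion I would use that, since $Z_f(0,0)=0$, the identity $Z_f(X,Y)/\sqrt{X^2+Y^2}=f/r$ converts the analytic question into a statement about this quotient. If $\lim_{r\to\infty}f/r=0$, then $Z_f(X,Y)=o(\sqrt{X^2+Y^2})$, so $Z_f$ is differentiable at $(0,0)$ with vanishing differential. Conversely, if $Z_f$ is differentiable at $(0,0)$, then $f/r=Z_f/\sqrt{X^2+Y^2}$ tends along the ray of angle $\theta$ to $dZ_f(0,0)(\cos\theta,\sin\theta)$; the existence of the direction-free limit $\lim_{r\to\infty}f/r$ then forces this linear expression to be independent of $\theta$, hence $dZ_f(0,0)=0$ and $\lim_{r\to\infty}f/r=0$. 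This gives the stated equivalence.

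The main obstacle is the graph step of the second paragraph: the whole argument rests on controlling the sign of $\partial\Phi/\partial r$ uniformly up to $r=\infty$, and the point to verify is that inequality \eqref{eq:fr0} is precisely the condition keeping $r\mapsto\Phi$ strictly monotone, so that the projection remains injective with nonvanishing Jacobian. Once this is secured, the clean identity $Z_f/\sqrt{X^2+Y^2}=f/r$ renders both the continuous extension and the differentiability criterion essentially immediate.
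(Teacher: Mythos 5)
Your continuity and graph steps are, in substance, the paper's own proof: the paper also reduces the graph assertion to strict monotonicity of the radial part of the planar projection, computing $\psi_\rho>0$ in the variable $\rho=1/r$ where you compute $\partial\Phi/\partial r<0$ (the same inequality, since $\partial_r=-\rho^2\partial_\rho$), and its differentiability criterion rests on the same identity $Z_f/\sqrt{X^2+Y^2}=f/r$, which is exactly \eqref{eq:limZ2}. Up to and including the ``if'' direction of the last assertion, your argument is correct and at the paper's level of detail.

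The ``only if'' direction, however, is circular as written. You deduce from differentiability that $f/r$ tends along each ray to $dZ_f(0,0)(\cos\theta,\sin\theta)$, and then say that ``the existence of the direction-free limit $\lim_{r\to\infty}f/r$ forces'' this linear form to be independent of $\theta$. But in this implication the existence of that limit is not a hypothesis; it is part of the conclusion to be proved. What you actually establish is: if $Z_f$ is differentiable \emph{and} $\lim_{r\to\infty}f/r$ exists, then both the differential and the limit vanish; equivalently, $Z_f$ is differentiable \emph{with vanishing differential} if and only if $\lim_{r\to\infty}f/r=0$. The missing step cannot be supplied, because differentiability with nonzero differential is compatible with all the hypotheses: for $f(x,y)=x/2$ one has $f/r=(\cos\theta)/2$ bounded and $|f^2-2rff_r|/r^2=(\cos^2\theta)/4<1$, the graph lies in the plane $z=x/2$, which inversion preserves as a set, so $Z_f(X,Y)=X/2$ is differentiable (indeed analytic) at $(0,0)$, while $\lim_{r\to\infty}f/r$ does not exist. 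You are in good company: the paper's proof has the same defect, since \eqref{eq:limZ2} is a chain of equalities between limits that need not exist, and ``proving the last assertion'' tacitly identifies ``differentiable'' with ``differentiable with zero differential''; the ``only if'' half of the proposition is false as literally stated. Nothing downstream is affected --- in Corollary \ref{cor:diff} and in Section 5 the function $f$ is bounded, so only the ``if'' direction is used --- but the clean statement, which is what both you and the paper actually prove, is the one with ``vanishing differential.''
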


\begin{proof}
We can write
\begin{equation}\label{eq:wr}
\Psi_f(u,v)
=
\frac1{1+\phi(u,v)^2}
\left(
u,v,\phi(u,v)\sqrt{u^2+v^2}
\right),
\end{equation}
where
\begin{equation}\label{eq:phi}
\phi(u,v)=\sqrt{u^2+v^2}\hat f(u,v)
=\frac{f(x,y)}{r}.
\end{equation}
Since $f/r$ is bounded, the function
$\phi$ is bounded on $U^*_{1/R}$.
Thus, one can prove
$\lim_{\rho\to 0}\Psi_f=(0,0,0)$
using \eqref{eq:wr},
that is, $\Psi_f(u,v)$ can be 
continuously extended to $(0,0)$.
We denote by 
$
\Pi:\R^3 \ni (x,y,z) \mapsto (x,y)\in \R^2
$
the orthogonal projection.
By setting
$$
\psi(\rho,\theta):=\frac{\rho}
{1+\phi(\rho \cos \theta,\rho \sin \theta)^2},
$$
it holds that 
\begin{equation}\label{eq:Pi}
\Pi\circ \Psi_f(u,v)
=\biggl(\psi(\rho,\theta)\cos \theta,
\psi(\rho,\theta)\sin \theta\biggr).
\end{equation}
Since
$
\hat f(\rho \cos \theta,\rho \sin \theta)
=f(\cos \theta/\rho,\sin \theta/\rho),
$
we have
$$
\phi_\rho
= f-r f_r.
$$
In particular, it holds that
$$
\psi_\rho
=\frac{1-(f^2-2r ff_r)/r^2}{(1+f^2/r^2)^2}.
$$
By \eqref{eq:fr0},
there exists $\epsilon>0$ 
such that
$\rho\mapsto \psi(\rho,\theta)$ 
$(|\rho|\le \epsilon)$
is a monotone increasing function
for each $\theta$.
Thus, by \eqref{eq:Pi},
we can conclude that
$\Pi\circ \Psi_f:\overline{U_{\epsilon}}\to \R^2$
is an injection.
Since a continuous bijection 
from a compact space to a Hausdorff space
is a homeomorphism,
the inverse map
$
G:\Omega\to U_{\epsilon}
$
of $\Pi\circ \Psi_f|_{U_\epsilon}$
is continuous,
where
$\Omega$ is a neighborhood of
the origin of the $XY$-plane in $\R^3$.
Then the graph of
\begin{equation}\label{eq:zz}
Z_f\,\,\left(=\frac{\rho \phi}{1+\phi^2}\right)=
\frac{\phi(G(X,Y))\rho(G(X,Y))}{1+\phi(G(X,Y))^2}
\end{equation}
coincides with the image of $\Psi_f=(X,Y,Z)$ 
around $(0,0,0)$.
Then
$$
X=\frac{u}{1+\phi^2},\quad Y=\frac{v}{1+\phi^2},
\quad
Z=\frac{\rho\phi}{1+\phi^2}.
$$
Since $\rho\to 0$ as
$(X,Y)\to (0,0)$, 
we obtain
\begin{equation}\label{eq:limZ2}
\lim_{(X,Y)\to (0,0)}\frac{Z_f(X,Y)}{\sqrt{X^2+Y^2}}
=\lim_{(X,Y)\to (0,0)}
\frac{\phi\rho}{\sqrt{u^2+v^2}}=
\lim_{\rho\to 0}\phi=\lim_{r\to \infty}\frac{f}{r},
\end{equation}
proving the last assertion.
\end{proof}

\begin{corollary}\label{cor:diff}
Suppose that $f:\R^2\setminus \Omega_R\to \R$
is a bounded $C^\infty$-function
satisfying 
\begin{equation}\label{eq:fr}
\lim_{r\to \infty}\frac{f_r}r=0.
\end{equation}
Then the inversion
$\Psi_f:U^*_{1/R}\to \R^3$ 
can be continuously extended  to $(0,0)$.
Moreover, the
image of $\Psi_f$ is locally a
graph which 
is differentiable at $(0,0)$.
\end{corollary}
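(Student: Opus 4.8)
The plan is to deduce everything from Proposition \ref{prop:diff}, by checking that the two hypotheses of the Corollary---boundedness of $f$ together with \eqref{eq:fr}---imply all the assumptions used there, including the sharp condition \eqref{eq:fr0} and the differentiability criterion $\lim_{r\to\infty}f/r=0$. So the whole argument is a matter of verifying hypotheses and then invoking the Proposition.

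First I would observe that, since $f$ is bounded, say $|f|\le M$, we have $|f/r|\le M/r\to 0$ as $r\to\infty$; in particular $f/r$ is bounded. Hence the first assertion of Proposition \ref{prop:diff} applies verbatim, and $\Psi_f$ extends continuously to $(0,0)$. The same estimate gives $\lim_{r\to\infty}f/r=0$, which is precisely the differentiability criterion appearing at the end of Proposition \ref{prop:diff}; so once the graph structure is established, differentiability at $(0,0)$ will follow immediately.

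The one step that requires an argument is the verification of \eqref{eq:fr0}. Here I would write
\[
\frac{f^2-2rff_r}{r^2}=\frac{f^2}{r^2}-\frac{2ff_r}{r}
=\frac{f^2}{r^2}-2f\cdot\frac{f_r}{r}.
\]
The first summand is bounded by $M^2/r^2$ and hence tends to $0$; for the second, boundedness of $f$ together with the hypothesis \eqref{eq:fr}, $f_r/r\to 0$, shows that $2f\cdot(f_r/r)\to 0$ as well. Therefore the whole expression tends to $0$, and in particular there is an $R'\ge R$ for which $|(f^2-2rff_r)/r^2|<1$ holds for all $r>R'$, that is, \eqref{eq:fr0} is satisfied after replacing $R$ by $R'$.

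Finally, since all the conclusions of Proposition \ref{prop:diff}---the local graph representation and the differentiability of $Z_f$---are statements about a neighborhood of $(0,0)$ in the $uv$-plane, equivalently about the regime $r\to\infty$, nothing is lost by enlarging $R$ to $R'$. Applying Proposition \ref{prop:diff} on the smaller disk $U^*_{1/R'}$ then yields both the local graph representation and, via the criterion $\lim_{r\to\infty}f/r=0$ already noted, its differentiability at $(0,0)$. The only mild subtlety---the ``main obstacle,'' such as it is---is the handling of the cross term $2f\,(f_r/r)$, where one must combine the boundedness of $f$ with the decay of $f_r/r$ rather than of $f_r$ itself; everything else is a direct appeal to the Proposition.
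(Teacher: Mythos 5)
Your proposal is correct and is essentially the paper's own argument: the paper states the Corollary without a written proof precisely because it is meant to follow from Proposition \ref{prop:diff} by the verification you give, namely that boundedness of $f$ yields $f/r\to 0$ (hence continuous extension and the differentiability criterion), while boundedness of $f$ combined with \eqref{eq:fr} makes $(f^2-2rff_r)/r^2=f^2/r^2-2f\,(f_r/r)$ tend to $0$, so \eqref{eq:fr0} holds after enlarging $R$, which is harmless for the local conclusions at $(0,0)$.
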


\begin{example}\label{ex:B0}
Bates' example (cf. \eqref{eq:B})
mentioned in the introduction
is differentiable. In fact, $B(x,y)$
is bounded and
$B_r/r$ converges to zero as $r\to \infty$.
However, the inversion of $(x, y, B(x, y))$ is 
not $C^1$.
In fact, 
the unit normal vector
field of the graph of $B$ is 
not continuously extended to 
the point at infinity.
Since the inversion preserves the angle,
the unit normal vector field of 
its inversion cannot be
continuously extended to $(0, 0, 0)$. 
\end{example}

\begin{example}\label{ex:GH}
Ghomi-Howard \cite{GH} gave an example
\begin{equation}\label{eq:GH}
f_{GH}=1+\lambda \frac{1+x+y^2}{\sqrt{1+(x+y^2)^2}}
\qquad (\lambda>0).
\end{equation}
The graph of $f_{GH}$ is an umbilic free
(see Example \ref{ex:B2} in Section 3).
The function $f_{GH}$ is bounded.
In addition, since
$(f_{GH})_r$ is bounded, \eqref{eq:fr} is
obvious. Therefore,
as pointed out in \cite{GH},
the inversion of $(x, y,f_{GH}(x,y))$ 
is differentiable. 
However, it is not a $C^1$-map.
In fact, the limit of
the unit normal
 vector field along $y=0$
of the graph of $f_{GH}$ 
is not equal to that along $x+y^2 =0$
at the point at infinity.
\end{example}

Next, we give a condition
for $\Psi_f$ to be 
extendable as a $C^1$-map to $(0,0)$.

\begin{proposition}\label{prop:a}
Suppose that $f:\R^2\setminus \Omega_R\to \R$
is a bounded $C^\infty$-function satisfying
$$ 
\mbox{\rm (a)}\,\,\, 
\lim_{r\to \infty}f_r=0,\qquad 
\mbox{\rm (b)}\,\,\, 
\lim_{r\to \infty}f_\theta/r=0. 
$$ 
Then $\Psi_f=(X,Y,Z)$ can be
extended to $(0,0)$ as a $C^{1}$-map.
Moreover, 
the map
$(u,v)\mapsto (X(u,v),Y(u,v))$ 
is a $C^1$-diffeomorphism
from a neighborhood
of the origin in the $uv$-plane
onto a neighborhood of the origin
in the $XY$-plane.
\end{proposition}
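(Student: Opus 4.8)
The plan is to show that each component of $\Psi_f=(X,Y,Z)$ extends across the origin as a $C^1$-function by computing its first partial derivatives on $U^*_{1/R}$ and verifying that they possess continuous limits as $(u,v)\to(0,0)$. The continuity of the extension itself is already furnished by Proposition \ref{prop:diff}: boundedness of $f$ makes $\phi$ bounded, whence $\lim_{\rho\to 0}\Psi_f=(0,0,0)$. So it suffices to control the partials. Writing $\Psi_f$ as in \eqref{eq:wr}, namely
\begin{equation*}
X=\frac{u}{1+\phi^2},\qquad Y=\frac{v}{1+\phi^2},\qquad Z=\frac{\rho\,\phi}{1+\phi^2},
\end{equation*}
every partial derivative becomes a rational expression in $u,v,\rho,\phi$ together with $\phi_u$ and $\phi_v$; for instance
\begin{equation*}
X_u=\frac{1}{1+\phi^2}-\frac{2u\phi\,\phi_u}{(1+\phi^2)^2},\qquad
X_v=-\frac{2u\phi\,\phi_v}{(1+\phi^2)^2}.
\end{equation*}

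First I would express $\phi_u$ and $\phi_v$ through the polar substitution \eqref{eq:uvxy}. With $\rho_u=\cos\theta$, $\rho_v=\sin\theta$, $\theta_u=-\sin\theta/\rho$, $\theta_v=\cos\theta/\rho$, and using the identity $\phi_\rho=f-rf_r$ from the proof of Proposition \ref{prop:diff} together with the analogous $\phi_\theta=\rho f_\theta$, one obtains
\begin{align*}
\phi_u&=(f-rf_r)\cos\theta-f_\theta\sin\theta,\\
\phi_v&=(f-rf_r)\sin\theta+f_\theta\cos\theta.
\end{align*}
The delicate point is that these quantities need not stay bounded near the origin: hypothesis (a) forces only $f_r\to 0$, not $rf_r\to 0$ (recall $r=1/\rho\to\infty$), while hypothesis (b) forces only $f_\theta/r=\rho f_\theta\to 0$, not $f_\theta\to 0$. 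Thus $\phi_u,\phi_v$ may genuinely blow up, and the entire argument rests on showing that in each partial derivative this blow-up is annihilated by the accompanying factors of $\rho$.

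The heart of the matter is therefore the following factor-counting. Since $u=\rho\cos\theta$ and $\phi=\rho f$, the product $u\phi$ carries a factor $\rho^2$, and the dangerous terms assemble as
\begin{equation*}
u\phi\,\phi_u=\rho^2 f\cos\theta\bigl[(f-rf_r)\cos\theta-f_\theta\sin\theta\bigr],
\end{equation*}
where $\rho^2\cdot rf_r=\rho f_r$ and $\rho^2 f_\theta=\rho\,(\rho f_\theta)$ both tend to $0$ by (a) and (b), while $\rho^2 f^2\to 0$ by boundedness. Hence $u\phi\,\phi_u\to 0$, and symmetrically $u\phi\,\phi_v,\,v\phi\,\phi_u,\,v\phi\,\phi_v\to 0$. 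Because $\phi\to 0$, one reads off $X_u\to 1$, $X_v\to 0$, $Y_u\to 0$, $Y_v\to 1$, the limits being independent of $\theta$ since those in (a), (b) and the boundedness of $f$ are uniform in the angular variable. The same count applied to $Z$, where $\rho\phi_u=(\rho f-f_r)\cos\theta-\rho f_\theta\sin\theta\to 0$, yields $Z_u\to 0$ and $Z_v\to 0$.

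Finally I would invoke the elementary fact that a continuous function whose first partials exist off a point and extend continuously to it is $C^1$ there (proved via the mean value theorem, using the continuity already in hand). This gives the $C^1$-extension of $\Psi_f$ to $(0,0)$, and the Jacobian of $(u,v)\mapsto(X,Y)$ at the origin is the identity matrix. Its non-degeneracy lets the inverse function theorem supply the claimed local $C^1$-diffeomorphism onto a neighborhood of the origin in the $XY$-plane. I expect the true obstacle to be precisely the factor-counting above: one must check that every potentially unbounded term, $rf_r$ and $f_\theta$, is paired with enough powers of $\rho$ that the weak hypotheses (a) and (b)—strictly weaker than boundedness of $rf_r$ or of $f_\theta$—still suffice to force the limits.
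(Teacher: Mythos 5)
Your proof is correct and takes essentially the same route as the paper: the paper likewise computes the first partials in polar coordinates (organized through the auxiliary functions $h=\rho^2\hat f$ and $k=\phi^2$, with its Lemma~2.5 being exactly your translation of (a), (b) into $\rho^2\hat f_\rho\to 0$ and $\rho\hat f_\theta\to 0$), shows they extend continuously with identity Jacobian for $(X,Y)$ at the origin, and concludes with the inverse function theorem. Your ``factor-counting'' of powers of $\rho$ against $rf_r$ and $f_\theta$ is the same estimate the paper performs on $h_u,h_v,k_u,k_v$, just carried out on $X,Y,Z$ directly.
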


To prove this, we prepare the 
following lemma.

\begin{lemma}\label{lem:A}
The conditions $($a$)$ and $($b$)$
in Proposition \ref{prop:a}
are equivalent to the following two
conditions, respectively$:$
$$
\mbox{\rm (1)}\,\,\, 
\lim_{\rho\to 0}\rho^2\hat f_\rho=0,
\qquad
\mbox{\rm (2)}\,\,\, 
\dy\lim_{\rho\to 0}\rho \hat f_\theta=0.
$$ 
\end{lemma}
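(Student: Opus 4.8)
The plan is to express the $uv$-plane polar derivatives $\hat f_\rho$ and $\hat f_\theta$ directly in terms of the $xy$-plane polar derivatives $f_r$ and $f_\theta$, and then to read off the two equivalences from the radial substitution $r=1/\rho$. First I would record that, by \eqref{eq:uvxy}, the two polar coordinate systems share the angular variable $\theta$ and are related radially only through $\rho=1/r$. Viewing $\hat f$ as a function of $(\rho,\theta)$ via $\hat f(\rho\cos\theta,\rho\sin\theta)=f(\cos\theta/\rho,\sin\theta/\rho)$, the right-hand side is simply $f$ evaluated at radius $r=1/\rho$ and the same angle $\theta$.

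Differentiating this identity is the single computational step. Holding $\theta$ fixed, the chain rule gives $\hat f_\rho=f_r\cdot\frac{d(1/\rho)}{d\rho}=-r^2 f_r$; holding $\rho$ fixed and using that $r$ depends on $\rho$ alone, differentiation in $\theta$ gives $\hat f_\theta=f_\theta$. Multiplying by the indicated powers of $\rho$ and using $\rho r=1$ then collapses these to the clean identities
$$
\rho^2\hat f_\rho=-f_r,\qquad \rho\hat f_\theta=\frac{f_\theta}{r}.
$$
Since $\rho\to 0$ is equivalent to $r\to\infty$, condition (1), $\lim_{\rho\to0}\rho^2\hat f_\rho=0$, is literally condition (a), $\lim_{r\to\infty}f_r=0$, and condition (2), $\lim_{\rho\to0}\rho\hat f_\theta=0$, is literally condition (b), $\lim_{r\to\infty}f_\theta/r=0$. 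This establishes both equivalences in the stated correspondence.

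The argument is elementary, so there is no genuine obstacle; the only point requiring care is bookkeeping in the change of variables, namely being consistent about which variable is held fixed when passing between the $(\rho,\theta)$ and $(r,\theta)$ descriptions, since $r$ is a function of $\rho$ while $\theta$ is shared. As an independent check I would verify the identity $\rho^2\hat f_\rho=-f_r$ against the relation $\phi_\rho=f-rf_r$ already derived in the proof of Proposition \ref{prop:diff}: with $\phi=\rho\hat f$ one has $\phi_\rho=\hat f+\rho\hat f_\rho=f-\rho r^2 f_r=f-rf_r$, which matches and confirms the computation.
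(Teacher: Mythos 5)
Your proof is correct and follows essentially the same route as the paper: the paper's proof consists precisely of the identity $\hat f_\rho=-f_r/\rho^2$ (your $\rho^2\hat f_\rho=-f_r$) for the equivalence of (1) and (a), and the observation that $\hat f_\theta=f_\theta$ makes (2) and (b) obviously equivalent. Your cross-check against $\phi_\rho=f-rf_r$ is a nice extra confirmation but not needed.
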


\begin{proof}
The equivalency of (2) and (b) is
obvious.
The equivalency of (1) and (a)
follows from the identity
$
\hat f_\rho
=-{f_r}/{\rho^2}. 
$
\end{proof}

\begin{proof}[Proof of 
Proposition \ref{prop:a}]
We see by Corollary 2.2 that $\Psi_f$ can be extended
to $(0, 0)$ as a differentiable map and 
the map $(u, v)\mapsto (X(u, v), Y(u, v))$ 
is a homeomorphism from a neighborhood of 
$(0, 0)$ onto a
neighborhood of $(0, 0)$.
We set
\begin{equation}\label{eq:hk}
h:=\rho^2 \hat f(=\rho \phi),\quad k:=(\rho \hat f)^2(=\phi^2).
\end{equation}
By \eqref{eq:A}, we can write
\begin{equation}\label{eq:Psi}
\Psi_f=(X,Y,Z)=
\frac{1}{k+1}(u,v,h).
\end{equation}
To show that $\Psi_f$ is a $C^1$-map
at $(0,0)$,
it is sufficient to show that
$h,k$ are $C^1$-functions.
Since $h$ and $k$ are $C^\infty$-functions
on $U_{1/R}^*$,
they satisfy
\begin{align}\label{eq:h0}
h_u&=
\rho  \biggl((2 {\hat f} + \rho {\hat f}_\rho) \cos \theta
-{\hat f}_\theta \sin \theta\biggr), \quad
h_v=
\rho  
\biggl((2 {\hat f} + \rho {\hat f}_\rho) \sin \theta 
+{\hat f}_\theta \cos \theta \biggr),\\
\label{eq:k0}
k_u&=
2 {\hat f} \rho  \biggl(\cos \theta ({\hat f}+\rho {\hat f}_\rho)
-{\hat f}_\theta \sin \theta\biggr),\quad
k_v=
2 {\hat f} \rho  \biggl(\sin \theta ({\hat f}+\rho {\hat f}_\rho)
+{\hat f}_\theta \cos \theta\biggr)
\end{align}
on $U_{1/R}^*$.
Using (1), (2) in Lemma \ref{lem:A},
\eqref{eq:h0} and \eqref{eq:k0},
one can easily see that
\begin{equation}\label{eq:00}
\lim_{\rho\to 0}h_u=\lim_{\rho\to 0}h_v
=\lim_{\rho\to 0}k_u=\lim_{\rho\to 0}k_v=0,
\end{equation}
which shows that
$\Psi_f$ extends to $(0,0)$
as a $C^1$-map.
By  \eqref{eq:Psi} and
\eqref{eq:00}, we have
$$
X_u(0,0)=1,\quad X_v(0,0)=0,\quad
Y_u(0,0)=0,\quad Y_v(0,0)=1.
$$
Thus the
second assertion follows from
the inverse mapping theorem,
because the Jacobi matrix of
the map
$(u,v)\mapsto (X(u,v),Y(u,v))$ 
is regular at $(0,0)$.
\end{proof}

In Section 5, we need the following:

\begin{proposition}\label{thm:a}
Let $f:\R^2\setminus \Omega_R\to \R$ be a 
bounded $C^\infty$ function
satisfying the conditions (a)
and (b) of
Proposition \ref{prop:a}.
If there exists a constant
$
0\le c<1/2
$
such that
$$
r^{1-c/2}f_r,\quad 
r^{-c/2} f_\theta,\quad 
r^{2-c}f_{rr},\quad 
r^{1-c}f_{r\theta},\quad 
r^{-c}f_{\theta\theta}
$$
are bounded on $\R^2\setminus \Omega_R$,
then the map
$
(u,v)\mapsto (X(u,v),Y(u,v))
$
is a $C^2$-map at $(0,0)$,
where $\Psi_f=(X,Y,Z)$.
\end{proposition}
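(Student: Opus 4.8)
The plan is to reuse the representation $\Psi_f=(X,Y,Z)=\frac{1}{k+1}(u,v,h)$ from \eqref{eq:Psi}, where $h=\rho^2\hat f$ and $k=(\rho\hat f)^2=\phi^2$ as in \eqref{eq:hk}, and to show that each of the six second-order partial derivatives of $X=u/(k+1)$ and $Y=v/(k+1)$ with respect to $u,v$ admits a continuous extension to the origin. Since $X,Y$ are already $C^1$ on a neighborhood of $(0,0)$ by Proposition \ref{prop:a} and are $C^\infty$ on the punctured disk $U_{1/R}^*$, this is enough: a function that is $C^1$ near $(0,0)$, $C^2$ away from $(0,0)$, and whose second partials extend continuously to $(0,0)$ is $C^2$ there. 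Concretely, applying the mean value theorem to $X_u,X_v,Y_u,Y_v$ along coordinate segments shows that the limiting values of the second partials are indeed the second partials at $(0,0)$, and continuity is then built in by construction.

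The reduction to growth estimates on $k$ is the algebraic core. Differentiating $X=u(k+1)^{-1}$ gives
\[
X_{uu}=-\frac{2k_u}{(k+1)^2}-\frac{u\,k_{uu}}{(k+1)^2}+\frac{2u\,k_u^2}{(k+1)^3},
\]
and analogous formulas hold for $X_{uv},X_{vv}$ and for the second partials of $Y=v(k+1)^{-1}$. The denominators tend to $1$ because $k=\phi^2\to0$ as $\rho\to0$. Inspecting these formulas, every term is either a multiple of $k_u$ or $k_v$, or else carries an explicit factor $u$ or $v$ (of order $\rho$) multiplying one of $k_{uu},k_{uv},k_{vv}$ or a product of first derivatives. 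Hence it suffices to prove the two growth estimates $k_u,k_v=O(\rho^{1-c/2})$ and $k_{uu},k_{uv},k_{vv}=O(\rho^{-c})$ as $\rho\to0$.

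To obtain these I would pass to the $(\rho,\theta)$-coordinates using $\rho=1/r$ together with $\hat f_\rho=-f_r/\rho^2$ (as in Lemma \ref{lem:A}), $\hat f_\theta=f_\theta$, $\hat f_{\rho\rho}=f_{rr}/\rho^4+2f_r/\rho^3$, $\hat f_{\rho\theta}=-f_{r\theta}/\rho^2$ and $\hat f_{\theta\theta}=f_{\theta\theta}$. The five weighted boundedness hypotheses translate, via $\phi=\rho\hat f$, into $\phi=O(\rho)$, $\phi_\rho=O(\rho^{-c/2})$, $\phi_\theta=O(\rho^{1-c/2})$, $\phi_{\rho\rho}=O(\rho^{-1-c})$, $\phi_{\rho\theta}=O(\rho^{-c})$ and $\phi_{\theta\theta}=O(\rho^{1-c})$. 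Converting to the $(u,v)$-frame through $\partial_u=\cos\theta\,\partial_\rho-\rho^{-1}\sin\theta\,\partial_\theta$ and $\partial_v=\sin\theta\,\partial_\rho+\rho^{-1}\cos\theta\,\partial_\theta$, and noting that a second $u,v$-derivative produces at worst the combinations $\phi_{\rho\rho}$, $\rho^{-1}\phi_\rho$, $\rho^{-1}\phi_{\rho\theta}$, $\rho^{-2}\phi_\theta$, $\rho^{-2}\phi_{\theta\theta}$, one reads off $\phi_u,\phi_v=O(\rho^{-c/2})$ and $\phi_{uu},\phi_{uv},\phi_{vv}=O(\rho^{-1-c})$. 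Since $k=\phi^2$, this yields $k_u=2\phi\phi_u=O(\rho^{1-c/2})$ and $k_{uu}=2\phi_u^2+2\phi\phi_{uu}=O(\rho^{-c})$, with the same orders for the mixed and $v$-derivatives.

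Combining these, every second partial of $X$ and $Y$ tends to $0$ as $\rho\to0$: the terms carrying a factor $u$ or $v$ are $O(\rho)\cdot O(\rho^{-c})=O(\rho^{1-c})\to0$ since $c<1/2<1$, while the remaining terms are multiples of $k_u,k_v=O(\rho^{1-c/2})\to0$. Declaring all second partials of $X$ and $Y$ to be $0$ at the origin therefore makes them continuous, and the mean-value argument of the first paragraph upgrades $C^1$ to $C^2$. I expect the main obstacle to be the bookkeeping in the estimate for $k_{uu},k_{uv},k_{vv}$: one must check that the dangerous $\rho^{-2}$ factors created by differentiating twice in $u,v$ are, after multiplication by $\phi=O(\rho)$, tamed down to order $\rho^{-c}$ by the weighted bounds — and it is precisely here that all five hypotheses on $f_r,f_\theta,f_{rr},f_{r\theta},f_{\theta\theta}$ are needed simultaneously.
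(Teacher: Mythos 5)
Your proposal is correct and follows essentially the same route as the paper's proof: the same representation $X=u/(k+1)$, $Y=v/(k+1)$, the same explicit formulas for $X_{uu},X_{uv},X_{vv}$ in terms of $k_u,k_v,k_{uu},k_{uv},k_{vv}$, and the same reduction to weighted growth estimates on the second derivatives of $k$ obtained by translating the polar hypotheses into bounds near $\rho=0$ (the paper's Lemmas \ref{lem:28} and \ref{lemma:eq3}). The only differences are bookkeeping: you carry the estimates through $\phi=\rho\hat f$ rather than $\hat f$ itself, arriving at the slightly sharper bound $k_{uu},k_{uv},k_{vv}=O(\rho^{-c})$ where the paper records $O(\rho^{-2c})$ (either suffices, since $2c<1$), and you spell out the mean-value-theorem step showing that continuous extension of the second partials upgrades $C^1$ to $C^2$, which the paper leaves implicit.
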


We prepare the
following lemmas:

\begin{lemma}\label{lem:28}
The boundedness of the five
functions in Proposition 
\ref{thm:a} is equivalent
to the boundedness of 
the functions
\begin{equation}\label{eq:5ft}
\rho^{1+c/2}\hat f_\rho,\quad 
\rho^{c/2} \hat f_\theta,\quad 
\rho^{2+c} \hat f_{\rho\rho},\quad 
\rho^{1+c} \hat f_{\rho\theta},\quad 
\rho^c \hat f_{\theta\theta} 
\end{equation}
on $U\setminus \{(0,0)\}$,
where $U$ is 
a sufficiently small neighborhood
of $(0,0)$.
\end{lemma}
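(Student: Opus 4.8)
The plan is to transport everything to the $\rho\theta$-coordinates via the change of variables $r = 1/\rho$ from \eqref{eq:uvxy}, under which the angular coordinate $\theta$ is common to both planes, and then to match the five weighted functions in \eqref{eq:5ft} against the five weighted functions of Proposition \ref{thm:a} power by power. The engine is the single identity $\hat f_\rho = -f_r/\rho^2$ already recorded in Lemma \ref{lem:A}, together with $\hat f_\theta = f_\theta$; iterating the operator $\partial_\rho = -\rho^{-2}\partial_r$ on the $r$-dependence produces all the second-order relations.

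First I would compute the second $\rho$-derivatives. Differentiating $\hat f_\rho = -f_r/\rho^2$ and $\hat f_\theta = f_\theta$ gives
\[
\hat f_{\rho\rho} = \frac{f_{rr}}{\rho^4} + \frac{2f_r}{\rho^3},
\qquad
\hat f_{\rho\theta} = -\frac{f_{r\theta}}{\rho^2},
\qquad
\hat f_{\theta\theta} = f_{\theta\theta}.
\]
Substituting $r = 1/\rho$ into the weighted functions of \eqref{eq:5ft} and simplifying then yields
\begin{gather*}
\rho^{1+c/2}\hat f_\rho = -r^{1-c/2}f_r,
\qquad
\rho^{c/2}\hat f_\theta = r^{-c/2}f_\theta,
\\
\rho^{1+c}\hat f_{\rho\theta} = -r^{1-c}f_{r\theta},
\qquad
\rho^{c}\hat f_{\theta\theta} = r^{-c}f_{\theta\theta}.
\end{gather*}
For these four functions the equivalence is term-by-term: each weighted $\rho$-derivative equals, up to sign, the corresponding weighted $r$-derivative appearing in Proposition \ref{thm:a}, so boundedness near $\rho = 0$ is the same as boundedness as $r\to\infty$, with no further input required.

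The one function that does not collapse to a single term is $\rho^{2+c}\hat f_{\rho\rho}$, for which the formula above gives
\[
\rho^{2+c}\hat f_{\rho\rho} = r^{2-c}f_{rr} + 2\,r^{1-c}f_r.
\]
I expect this to be the only delicate step. The extra summand $2\,r^{1-c}f_r$ is not among the five listed functions, but it is controlled by the first-order bound: writing $r^{1-c}f_r = r^{-c/2}\bigl(r^{1-c/2}f_r\bigr)$ and using $c\ge 0$, the factor $r^{-c/2}$ stays bounded as $r\to\infty$, so boundedness of $r^{1-c/2}f_r$ forces boundedness of $r^{1-c}f_r$. Hence, once the equivalence for $\rho^{1+c/2}\hat f_\rho$ is established, boundedness of $\rho^{2+c}\hat f_{\rho\rho}$ becomes equivalent to boundedness of $r^{2-c}f_{rr}$. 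Combining this bootstrapped equivalence with the four term-by-term equivalences proves the lemma.
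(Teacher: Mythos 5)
Your proposal is correct and follows essentially the same route as the paper: the paper's proof consists exactly of the relations $\rho\hat f_\rho=-rf_r$ and $\rho^2\hat f_{\rho\rho}=2rf_r+r^2f_{rr}$ (equivalently your formulas under $r=1/\rho$), leaving the rest as "easily checked." Your write-up simply supplies those omitted details, including the only genuinely non-trivial point, namely absorbing the cross term $2r^{1-c}f_r$ via $r^{1-c}f_r=r^{-c/2}\bigl(r^{1-c/2}f_r\bigr)$ and $c\ge 0$.
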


\begin{proof}
Differentiating 
$\hat f=\hat f(\rho \cos \theta,
\rho \sin \theta)$ 
by $\rho$, we have
$\rho \hat f_{\rho}=-r f_r$ and
$\rho^2 \hat f_{\rho\rho}=2r f_r+r^2 f_{rr}$.
Using these relations,
the assertion can be easily checked.
\end{proof}

\begin{lemma}\label{lemma:eq3}
Suppose that the five functions in \eqref{eq:5ft} 
are
bounded on $U\setminus \{ (0, 0)\}$.
Then 
$\rho^{2c}k_{uu},\,\,\rho^{2c} k_{uv}$ and
$\rho^{2c}k_{vv}$
are also bounded on $U\setminus \{ (0, 0)\}$,
where $k$
is the function given in \eqref{eq:hk}.
\end{lemma}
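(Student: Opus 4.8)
The plan is to pass everything to the polar coordinates $(\rho,\theta)$ and reduce the claim to a degree count in $\rho$. Since $u=\rho\cos\theta$ and $v=\rho\sin\theta$, the chain rule gives, for any smooth $g$ on the punctured neighborhood,
$$
g_u=\cos\theta\,g_\rho-\frac{\sin\theta}{\rho}\,g_\theta,
\qquad
g_v=\sin\theta\,g_\rho+\frac{\cos\theta}{\rho}\,g_\theta.
$$
Iterating this identity, each of $g_{uu},g_{uv},g_{vv}$ is a linear combination, with coefficients that are trigonometric polynomials in $\theta$ (hence bounded), of the five quantities
$$
g_{\rho\rho},\qquad \frac{g_{\rho\theta}}{\rho},\qquad \frac{g_{\theta\theta}}{\rho^2},\qquad \frac{g_\rho}{\rho},\qquad \frac{g_\theta}{\rho^2}.
$$
Taking $g=k$, it therefore suffices to prove that
$$
\rho^{2c}k_{\rho\rho},\qquad \rho^{2c-1}k_{\rho\theta},\qquad \rho^{2c-2}k_{\theta\theta},\qquad \rho^{2c-1}k_\rho,\qquad \rho^{2c-2}k_\theta
$$
are bounded on $U\setminus\{(0,0)\}$.

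Next I would compute the polar derivatives of $k=\phi^2$, where $\phi=\rho\hat f$ as in \eqref{eq:hk}. From $\phi_\rho=\hat f+\rho\hat f_\rho$, $\phi_\theta=\rho\hat f_\theta$, $\phi_{\rho\rho}=2\hat f_\rho+\rho\hat f_{\rho\rho}$, $\phi_{\rho\theta}=\hat f_\theta+\rho\hat f_{\rho\theta}$ and $\phi_{\theta\theta}=\rho\hat f_{\theta\theta}$, the product rule gives
$$
k_\rho=2\phi\phi_\rho,\quad k_\theta=2\phi\phi_\theta,\quad
k_{\rho\rho}=2\phi_\rho^2+2\phi\phi_{\rho\rho},\quad
k_{\rho\theta}=2\phi_\rho\phi_\theta+2\phi\phi_{\rho\theta},\quad
k_{\theta\theta}=2\phi_\theta^2+2\phi\phi_{\theta\theta}.
$$
Expanding each of these in terms of $\hat f,\hat f_\rho,\hat f_\theta,\hat f_{\rho\rho},\hat f_{\rho\theta},\hat f_{\theta\theta}$ yields explicit expressions to which the hypotheses can be applied.

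Finally I would insert the size estimates. The boundedness of the five functions in \eqref{eq:5ft} reads
$$
\hat f_\rho=O(\rho^{-1-c/2}),\quad \hat f_\theta=O(\rho^{-c/2}),\quad \hat f_{\rho\rho}=O(\rho^{-2-c}),\quad \hat f_{\rho\theta}=O(\rho^{-1-c}),\quad \hat f_{\theta\theta}=O(\rho^{-c}),
$$
and $\hat f=O(1)$ because $f$ is bounded. Substituting these into the formulas above and collecting the dominant power of $\rho$ in each term yields $k_\rho=O(\rho^{1-c/2})$, $k_\theta=O(\rho^{2-c/2})$, $k_{\rho\rho}=O(\rho^{-c})$, $k_{\rho\theta}=O(\rho^{1-c})$ and $k_{\theta\theta}=O(\rho^{2-c})$. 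Multiplying by the weights listed above, every one of the five quantities becomes $O(\rho^{3c/2})$ or $O(\rho^{c})$, which is bounded as $\rho\to 0$ since $c\ge 0$; this proves the lemma. The only real care needed is the bookkeeping: one must track which of the several summands in each $k$-derivative carries the most negative power of $\rho$, keeping in mind that the factor $\hat f$ contributes only the weakest estimate $O(1)$, so that the terms $\hat f\hat f_\theta$, $(\hat f+\rho\hat f_\rho)^2$ and $\hat f\hat f_{\theta\theta}$ are the ones controlling the final exponents.
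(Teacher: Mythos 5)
Your proof is correct and takes essentially the same route as the paper's: both reduce $k_{uu},k_{uv},k_{vv}$ to polar derivatives of $\hat f$ with bounded (trigonometric) coefficients and finish by counting powers of $\rho$, your two-step organization (general chain rule in $(\rho,\theta)$, then product rule for $k=\phi^2$) being just a tidier bookkeeping of the paper's direct expansion. One remark: like the paper's own proof, you invoke $\hat f=O(1)$ (boundedness of $f$), which is not part of the lemma's stated hypotheses but is supplied by Proposition \ref{thm:a}, in whose proof the lemma is used, so this is a shared (and in context harmless) implicit assumption rather than a gap.
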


\begin{proof}
In fact, each of
$
k_{uu},k_{uv},k_{vv}
$
is written 
as  a linear combination
of
$$
1, \quad\rho \hat f_\rho, \quad\hat f_\theta,
\quad
(\rho\hat f_\rho)^2,\quad 
\rho\hat f_\rho \hat f_\theta,\quad 
\hat f_\theta^2,
\quad
\rho^{2} \hat f_{\rho\rho},\quad 
\rho \hat f_{\rho\theta},\quad 
\hat f_{\theta\theta} 
$$
with coefficients that are 
bounded functions. 
For example,
\begin{align*}
k_{uv}&=
\sin 2 \theta  \left(\rho ^2 \hat f_\rho^2+\hat f
 \bigl(\rho ^2 \hat f_{\rho\rho}+3 \rho  
\hat f_\rho-\hat f
_{\theta\theta}\bigr)
-\hat f_\theta^2\right)\\
& \phantom{aaaaa}+2 \cos 2 \theta  \left(\hat f_\theta 
\bigl(\rho  \hat f_\rho+\hat f\bigr)+\rho  \hat f \hat 
f_{\rho\theta}\right).
\end{align*}
Thus, we get the assertion.
\end{proof}

\begin{proof}[Proof of Proposition \ref{thm:a}]
By Lemmas \ref{lem:28} and
\ref{lemma:eq3}, the fact that
$2c<1$ yields that
\begin{equation}\label{eq:lim-k}
\lim_{\rho\to 0}\rho k_{uu}=\lim_{\rho\to 0}\rho k_{uv}
=\lim_{\rho\to 0}\rho k_{vv}=0.
\end{equation}
Since
\begin{align*}
X_{uu}&=\frac{2 u k_u^2-2 (k+1) k_u-u (k+1) k_{uu}}{(k+1)^3},\\
X_{uv}&=
-\frac{k_v \left(-2 u k_u+k+1\right)+u (k+1) k_{uv}}{(k+1)^3},\quad
X_{vv}=
-\frac{u \left((k+1) k_{vv}-2 k_v^2\right)}{(k+1)^3},
\end{align*}
we have that
$X_{uu},X_{uv},X_{vv}$
tend to $0$ as $\rho\to 0$.
This implies that $X_u,X_v$ are $C^1$-functions.
Similarly, $Y_u,Y_v$ are also $C^1$-functions.
\end{proof}

\section{The pair of identifiers for umbilics}

Let $U$ be a domain on  $\R^2$.
Consider a flow (i.e. a $1$-dimensional foliation)
$\mathcal F$ defined on $U\setminus \{p_1,\dots,p_n\}$,
where $p_1,\dots,p_n$ are distinct points in $U$.
We are interested in the case that 
$\mathcal F$ is 
\begin{itemize}
\item the curvature line flow of an immersion $P:U\to \R^3$, 
\item the eigen-flow of a matrix-valued function 
on $U$, or 
\item the flow induced by a vector field on $U$.
\end{itemize}
We fix a simple closed smooth  
curve $\gamma:T^1\to U\setminus \{p_1,\dots,p_n\}$,
where $T^1:=\R/2\pi\Z$.
We set
$$
\partial_x:=\frac{\partial}{\partial x},\qquad
\partial_y:=\frac{\partial}{\partial y}.
$$
Then one can take a smooth vector field 
$$
V(t):=a(t) \partial_x+b(t)\partial_y
$$
along the curve $\gamma(t)$
such that $V(t)$ is a non-zero tangent vector of $\R^2$
at $\gamma(t)$ which points in the direction of
the flow $\mathcal F$.
Then the map
\begin{equation}\label{eq:hat}
\check V:T^1 \ni t \mapsto 
\frac{(a(t),b(t))}{\sqrt{a(t)^2+b(t)^2}}
\in S^1:=\{\mb x\in \R^2\,;\, |\mb x|=1\}
\end{equation}
is called the {\it Gauss map} of $\mathcal F$
with respect to the curve $\gamma$.
The mapping degree of the map
$\check V$ is called 
the {\it rotation index} of $\mathcal F$
with respect to $\gamma$ and
denoted by $\op{ind}(\mathcal F,\gamma)$,
which is a half-integer, in general.
If $\gamma$ surrounds only $p_j$,
then $\op{ind}(\mathcal F,\gamma)$ is independent
of the choice of such a curve $\gamma$.
So we call it 
the {\it $($rotation$)$ index} of the flow $\mathcal F$
at $p_j$, and it is denoted by $\op{ind}_{p_j}(\mathcal F)$.
If the flow $\mathcal F$ is generated by a
vector field $V$ defined on $U\setminus \{p_1,\dots,p_n\}$,
then $\op{ind}_{p_j}(\mathcal F)$
is an integer, and we denote it by
$\op{ind}_{p_j}(V)$.

We denote by 
$S_2(\R)$ the set of 
real symmetric $2$-matrices.
Let $U$ be a domain
in $\R^2$,
and
$$
A=\pmt{a_{11}(x,y)& a_{12}(x,y)\\
       a_{12}(x,y) & a_{22}(x,y)
}:U\to S_2(\R)
$$
a $C^\infty$-map.
A point $p\in U$ is called an 
{\it equi-diagonal point}
of $A$
if $a_{11}=a_{22}$ and
$a_{12}=0$ at $p$.
We now suppose that $p$ is
an isolated equi-diagonal point. 
Without loss of generality,
we may assume that $A$ has
no equi-diagonal points on $U\setminus\{p\}$.
Since two eigen-flows of $A$ are 
mutually orthogonal, the indices of 
the two eigen-flows of 
the $S_2(\R)$-valued function
$A$
are the same half-integer at $p$.
We denote it by $\op{ind}_p(A)$. 

It is well-known that 
for an $S_2(\R)$-valued function $A$,
the formula
\begin{equation}\label{eq:pa}
\op{ind}_{p}(A)=
\frac12\op{ind}_{p}(\mb v_A)
\end{equation}
holds, where $\mb v_A$ is the vector field
on $U$
given by
\begin{equation}\label{eq:va}
\mb v_A:=(a_{11}-a_{22})\partial_x+a_{12}\partial_y.
\end{equation}

We shall apply these facts
to the computation of the 
indices of isolated umbilics
on regular surfaces 
in $\R^3$ as follows.
Let 
$
f:U\to \R
$
be a $C^\infty$-function.
The symmetric matrices associated with
the first and the second fundamental forms of 
the graph of $f$ are given by
\begin{equation}\label{eq:I}
I:=\pmt{1+f_x^2 &  f_x f_y \\ f_xf_y & 1+f_y^2},\qquad
I\!I:=\pmt{f_{xx} & f_{xy} \\ f_{xy} & f_{yy}}.
\end{equation}
We consider 
a ${GL}(2,\R)$-valued function
\begin{equation}\label{eq:P}
P:=
\pmt{ 0 & \sqrt{1+f_x^2} \\
 -\sqrt{(1+f_x^2+f_y^2)/(1+f_x^2)} 
& {f_x  f_y}/{\sqrt{1+f_x^2}}},
\end{equation}
which satisfies the identity
$
PP^T=I
$, where $P^T$ is the transpose of $P$.
Then
$$
A_f:=P^{-1}I\!I(P^T)^{-1}=P^T(I^{-1}I\!I)(P^T)^{-1}
$$
is an $S_2(\R)$-valued function.
The umbilics of the graph of $f$
correspond to
the equi-diagonal points of $A_f$.
We show the following:

\begin{proposition}\label{prop:AB}
The symmetric matrix
$A_f(p)$ 
is proportional to the identity matrix
at $p\in U$
if and only if $p$ gives an umbilic of 
the graph of $f$. 
Moreover, if $p$ is an isolated umbilic,
then $\op{ind}_p(A_f)$ coincides with
the index of the umbilic $p$.
\end{proposition}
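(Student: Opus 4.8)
The plan is to exhibit $A_f$ as a conjugate of the shape operator and then track what this conjugation does to the principal directions. Write $S:=I^{-1}I\!I$ for the Weingarten (shape) operator and put $Q:=P^T$. Since $PP^T=I$, the identity $A_f=P^{-1}I\!I(P^T)^{-1}=P^T(I^{-1}I\!I)(P^T)^{-1}=QSQ^{-1}$ displays $A_f$ as conjugate to $S$; a direct transposition using $I\!I^T=I\!I$ shows $A_f^T=A_f$, so $A_f$ is genuinely $S_2(\R)$-valued and $\op{ind}_p(A_f)$ is defined. For the first assertion, recall that $p$ is an umbilic exactly when the two principal curvatures agree, i.e. when $S(p)=\kappa\,\mathrm{Id}$ for some $\kappa$. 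Conjugation sends scalar matrices to scalar matrices: $A_f(p)=Q(p)(\kappa\,\mathrm{Id})Q(p)^{-1}=\kappa\,\mathrm{Id}$, and conversely $A_f(p)=\mu\,\mathrm{Id}$ forces $S(p)=\mu\,\mathrm{Id}$. Hence $A_f(p)$ is proportional to the identity if and only if $p$ is an umbilic.

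For the index statement, the first observation is that the curvature line flow, read in the parameter domain $U$, is exactly the eigen-flow of $S$: the tangent direction of a curvature line at $q$ is a principal direction, i.e. an eigenvector of $S(q)$. Thus the index of the umbilic $p$ equals the rotation index of the eigen-flow of $S$. Next, from $A_f=QSQ^{-1}$ one reads off that $w$ is an eigenvector of $S(q)$ if and only if $Q(q)w$ is an eigenvector of $A_f(q)$ for the same eigenvalue. Consequently, along any simple loop $\gamma$ surrounding $p$, the eigen-flow of $A_f$ is the pointwise image, under the linear isomorphism $Q(\gamma(t))=P^T(\gamma(t))$, of the eigen-flow of $S$. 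The whole problem therefore reduces to showing that this pushforward does not alter the rotation index.

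Here the key structural fact is that $\det P=\sqrt{1+f_x^2+f_y^2}>0$, so $Q=P^T$ takes values in the connected group $GL^+(2,\R)$, and---crucially---$Q$ is defined and continuous on all of $U$, hence on the disk bounded by $\gamma$. It follows that the loop $t\mapsto Q(\gamma(t))$ is null-homotopic in $GL^+(2,\R)$ and can be contracted to the constant loop $\mathrm{Id}$. Choosing such a homotopy $H(s,t)$ and applying it to the eigenlines $\ell(t)$ of $S(\gamma(t))$ produces a continuous one-parameter family of flows $t\mapsto H(s,t)\cdot\ell(t)$ interpolating between the eigen-flow of $A_f$ (at $s=0$) and the eigen-flow of $S$ itself (at $s=1$, where $H=\mathrm{Id}$). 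Since the rotation index varies continuously in $s$ while taking values in the half-integers, it is constant, giving $\op{ind}_p(A_f)=$ index of the curvature line flow $=$ index of the umbilic. The step I expect to be the genuine obstacle is precisely this topological invariance: a pushforward by a loop in $GL^+(2,\R)$ that wound nontrivially \emph{would} shift the rotation index, so it is essential to use that $P$ extends over the disk (equivalently, that $Q\circ\gamma$ is contractible) rather than merely that each $Q(\gamma(t))$ is invertible. The half-integer (line-field) nature of the flow must also be accommodated, either by working with unoriented directions throughout or, as in \eqref{eq:pa}, by passing to the vector field $\mb v_{A_f}$ and halving.
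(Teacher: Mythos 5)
Your proposal is correct, and its skeleton coincides with the paper's: both exhibit the eigenvector correspondence between $S=I^{-1}I\!I$ and $A_f$ through the factor $P$ (you write $w\mapsto P^Tw$; the paper phrases it with row vectors, $(a_1,b_1)P=(a_2,b_2)$, which is the same statement), and both reduce the index claim to the fact that transporting a line field along $\gamma$ by the matrix loop $t\mapsto P(\gamma(t))$ cannot change the rotation index because that loop is null-homotopic among invertible matrices. Where you genuinely diverge is in how that null-homotopy is obtained. The paper observes that $P$ takes values in the set $\mathcal T$ of matrices $\pmt{0 & x\\ -y & z}$ with $x,y>0$, $z\in\R$, which is convex and hence contractible, so the loop is null-homotopic for free; this uses only the values of $P$ along $\gamma$ itself, but leans on the special triangular shape of $P$. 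You instead use that $P$ is defined and continuous on all of $U$, so the loop $Q\circ\gamma$ extends over the disk bounded by $\gamma$ and is therefore null-homotopic in $GL^+(2,\R)$, with $\det P=\sqrt{1+f_x^2+f_y^2}>0$ and the connectedness of $GL^+(2,\R)$ used to finish at the identity. Your route is more general---it applies to any continuous $GL^+$-valued $Q$ conjugating $S$ to $A_f$, with no reference to the particular form of $P$---and it correctly isolates the real danger, namely that $\pi_1\bigl(GL^+(2,\R)\bigr)\cong\Z$ is nontrivial, so pointwise invertibility alone would not suffice. The paper's route is more economical and would still work if $P$ were only defined on an annulus around $\gamma$ rather than on the whole disk. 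Since $P$ here is both globally defined and $\mathcal T$-valued, either argument closes the proof.
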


\begin{proof}
The first assertion follows from the definition of $A_f$.
Without loss of generality, we may assume
that $p$ coincides with  the origin $o:=(0,0)$, 
and the graph of $f$ has no
umbilics other than $o$ on $U$.
Take a sufficiently small positive
number $\epsilon>0$ so that
the circle
$
\gamma(t)=\epsilon\pmt{\cos t, \sin t}
$
($0\le t \le 2\pi$)
is null-homotopic in $U$. 

We denote by $(a_1 (t), b_1 (t))^T$ and $(a_2 (t), b_2 (t))^T$
eigen-vectors of
$I^{-1} I\!I$ 
and $A_f$ at $\gamma (t)$, respectively. We may suppose
$$
(a_1 (t), b_1 (t))P(\gamma (t))= (a_2 (t), b_2 (t))
 \qquad (0\leq t\leq
2\pi ).
$$
We set
$$
{\bf w}_i (t):=a_i (t)\partial_x + b_i (t)\partial_y 
\qquad (i=1,2).
$$
Then ${\bf w}_1$ points in one of the 
principal directions of the
graph of
$f$.
The  matrix $P(\gamma (t))$ 
takes values in
the set
\begin{equation}\label{eq:T}
\mathcal T:=\left\{\pmt{0 & x\\ -y & z}
\,;\, x,y>0,\,\, z\in \R\right\}.
\end{equation}
Since the set $\mathcal T$
is null-homotopic,
the mapping degree of 
$\check{\mb w}_1(t)$ with 
respect to the origin
is equal to that of
$\check{\mb w}_2(t)$.
Since 
the degree of 
$\check{\mb w}_2(t)$ with 
respect to $o$
coincides with 
$\op{ind}_o(A_f)$, we get the second 
assertion.
\end{proof}
\medskip

By a straightforward calculation,
one can get
the following identity:
$$
\tilde A_f:=hk^{3}A_f
=
\pmt{
f_x f_y (f_x f_yf_{xx}-2h f_{xy})+h^2 f_{yy}
& lk 
\\
lk &  k^2f_{xx}},
$$
where
$$
h:=1 + f_x^2,\quad k:=\sqrt{1 + f_x^2 + f_y^2},
\quad
l:=-h f_{xy} + f_xf_y f_{xx}.
$$
Then the coefficients of the vector field
$$
\mb v_{\tilde A_f}=v_1\partial_x+v_2\partial_y
$$ 
defined as in 
\eqref{eq:va} for $A=\tilde A_f$
are given by
\begin{align*}
&v_1=\tilde a_{11}-\tilde a_{22}=
(-1 + f_x^2) f_y^2f_{xx}  
-h f_{xx} - 
 2 hf_x f_{xy} f_y  
+ h^2 f_{yy}, \\
&v_2=\tilde a_{12}
=-k(h f_{xy} - f_xf_y f_{xx}),
\end{align*}
where $\tilde A_f=(\tilde a_{ij})_{i,j=1,2}$.
Hence, we get the following identity
$$
v_1=\frac{2f_xf_y}k v_2+h \biggl(-f_{xx} (1 + f_y^2) + (1 + f_x^2) f_{yy}\biggr).
$$
Consequently, we get the following fact
(cf. Ghomi-Howard \cite[(10)]{GH}):
 
\begin{fact}\label{prop:gh}
The graph of the function $z=f(x,y)$ 
defined on $U$ has an umbilic at 
$p\in U$ if and only if the functions
$$
d_1(x,y):=(1 + f_x^2) f_{xy} - f_xf_y f_{xx},
\quad 
d_2(x,y):= (1 + f_x^2) f_{yy}-f_{xx} (1 + f_y^2)
$$
both vanish at $p$.
\end{fact}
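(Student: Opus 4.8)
The plan is to read the stated equivalence off the computation of $\mathbf{v}_{\tilde A_f}$ carried out just above the statement, using Proposition~\ref{prop:AB} as the only external input. First I would invoke Proposition~\ref{prop:AB}: the point $p$ is an umbilic of the graph of $f$ precisely when $A_f(p)$ is proportional to the identity, i.e. when $p$ is an equi-diagonal point of $A_f$. Because $\tilde A_f = hk^3 A_f$ with $h=1+f_x^2>0$ and $k=\sqrt{1+f_x^2+f_y^2}>0$, the scalar factor $hk^3$ is everywhere strictly positive, so $p$ is an equi-diagonal point of $A_f$ if and only if it is one of $\tilde A_f$. By the defining formula \eqref{eq:va} for the associated vector field, this is in turn equivalent to the simultaneous vanishing at $p$ of $v_1=\tilde a_{11}-\tilde a_{22}$ and $v_2=\tilde a_{12}$.

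It then remains to match the system $\{v_1=0,\ v_2=0\}$ with $\{d_1=0,\ d_2=0\}$. Here I would simply note the two algebraic coincidences already visible in the text: since $d_1=(1+f_x^2)f_{xy}-f_xf_yf_{xx}=hf_{xy}-f_xf_yf_{xx}$, the computed value $v_2=-k\bigl(hf_{xy}-f_xf_yf_{xx}\bigr)$ equals $-k\,d_1$; and since $d_2=(1+f_x^2)f_{yy}-f_{xx}(1+f_y^2)$ is exactly the bracketed term in the derived identity, one has $v_1=(2f_xf_y/k)\,v_2+h\,d_2$. With these two relations the passage between the two systems is immediate: if $d_1=d_2=0$ at $p$ then $v_2=-k\,d_1=0$ and hence $v_1=0+h\,d_2=0$; conversely, if $v_1=v_2=0$ then $d_1=-v_2/k=0$, and substituting $v_2=0$ into the identity gives $h\,d_2=v_1=0$, so $d_2=0$ because $h>0$. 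Thus the loci $\{v_1=v_2=0\}$ and $\{d_1=d_2=0\}$ coincide, and combining this with the reduction of the first paragraph yields the Fact.

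Since the derivation of $\tilde A_f$, of $v_1$ and $v_2$, and of the linear identity relating $v_1,v_2,d_2$ has already been performed, no lengthy calculation remains, and I anticipate no genuine obstacle. The single point that must be handled with care is the strict positivity of $h$, $k$, and $hk^3$: it is exactly this positivity that ensures multiplication by these factors neither introduces nor removes zeros, so that the umbilic locus is faithfully encoded by the pair $(d_1,d_2)$ rather than by the more complicated entries of $A_f$ or $\tilde A_f$.
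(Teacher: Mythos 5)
Your proposal is correct and follows essentially the same route as the paper: the Fact is stated there as an immediate consequence ("Consequently, we get...") of Proposition~\ref{prop:AB}, the positivity of $hk^3$, and the two identities $v_2=-k\,d_1$ and $v_1=(2f_xf_y/k)\,v_2+h\,d_2$ displayed just before it. You have merely written out explicitly the elementary equivalence of the zero loci $\{v_1=v_2=0\}$ and $\{d_1=d_2=0\}$, which the paper leaves implicit.
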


We consider the vector field
$$
D_f:=d_1 \partial_x
+d_2 \partial_y
$$
defined on the domain $U$ in the $xy$-plane.
Suppose that $p$ is a  zero of $D_f$.
The following assertion holds:

\begin{proposition}\label{thm:xy}
If $p$ gives an isolated umbilic
of the graph of $f$, 
then half of the index of the vector field
$D_f$
at $p$ coincides with the index of 
the umbilic $p$.
\end{proposition}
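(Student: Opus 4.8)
The plan is to reduce the statement, via the results already established, to a comparison between the two vector fields $\mb v_{\tilde A_f}$ and $D_f$, and then to show that these have the same index at $p$ by exploiting the explicit linear relation between them. First I would observe that since $\tilde A_f=hk^{3}A_f$ and the scalar $hk^{3}$ is everywhere positive (because $h=1+f_x^2\ge 1$ and $k=\sqrt{1+f_x^2+f_y^2}>0$), the $S_2(\R)$-valued functions $A_f$ and $\tilde A_f$ have the same eigen-directions and the same equi-diagonal points; hence $\op{ind}_p(\tilde A_f)=\op{ind}_p(A_f)$, which by Proposition \ref{prop:AB} equals the index of the umbilic $p$. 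Applying the formula \eqref{eq:pa} to $\tilde A_f$ then gives
\[
(\text{index of the umbilic }p)=\op{ind}_p(\tilde A_f)=\tfrac12\op{ind}_p(\mb v_{\tilde A_f}),
\]
so it suffices to prove $\op{ind}_p(\mb v_{\tilde A_f})=\op{ind}_p(D_f)$.

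Next I would rewrite the two identities obtained above, namely $v_2=-k\,d_1$ and $v_1=(2f_xf_y/k)v_2+h\,d_2$, in matrix form as
\[
\pmt{v_1\\ v_2}=M\pmt{d_1\\ d_2},\qquad M:=\pmt{-2f_xf_y & h\\ -k & 0},
\]
so that $\mb v_{\tilde A_f}=M\,D_f$ as vector fields on $U$. Since $\det M=hk>0$ at every point, $M$ is a continuous map from $U$ into $GL^+(2,\R)$ and each value $M(q)$ is invertible; in particular $\mb v_{\tilde A_f}$ and $D_f$ vanish at exactly the same points, and $p$ is an isolated zero of each.

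The main obstacle is the remaining topological step: showing that multiplication by the matrix field $M$ does not alter the index. I would fix a small circle $\gamma$ surrounding $p$ and no other umbilic, null-homotopic in $U$. Because $\gamma$ bounds a disk in $U$ on which $M$ is defined and continuous, the loop $t\mapsto M(\gamma(t))$ is null-homotopic in $GL^+(2,\R)$; and since $GL^+(2,\R)$ is path-connected it is homotopic through loops in $GL^+(2,\R)$ to the constant loop at the identity. Letting $M_s(t)$ realize such a homotopy with $M_0(t)=M(\gamma(t))$ and $M_1(t)=\mathrm{Id}$, the family $V_s(t):=M_s(t)\,D_f(\gamma(t))$ interpolates between $\mb v_{\tilde A_f}|_\gamma$ and $D_f|_\gamma$, and every $V_s(t)$ is nonzero because $M_s(t)$ is invertible and $D_f(\gamma(t))\ne 0$ on $\gamma$.

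The normalized maps $\check V_s$ then give a homotopy in $S^1$, so the Gauss maps of $\mb v_{\tilde A_f}|_\gamma$ and $D_f|_\gamma$ have the same mapping degree, i.e. $\op{ind}_p(\mb v_{\tilde A_f})=\op{ind}_p(D_f)$. Combining this with the displayed identity gives that the index of the umbilic $p$ equals $\tfrac12\op{ind}_p(D_f)$, as claimed. The only genuinely non-computational point is the last homotopy argument: connectivity of $GL^+(2,\R)$ together with the contractibility of $\gamma$ in $U$ is exactly what guarantees that the matrix field $M$ contributes no extra winding, so that the degree of the Gauss map is transported unchanged from $D_f$ to $\mb v_{\tilde A_f}$.
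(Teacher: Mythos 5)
Your proof is correct, and its skeleton matches the paper's: both reduce the claim, via Proposition \ref{prop:AB} and the formula \eqref{eq:pa} applied to $\tilde A_f$, to showing $\op{ind}_p(\mb v_{\tilde A_f})=\op{ind}_p(D_f)$, and both prove this by viewing $\mb v_{\tilde A_f}$ as an invertible matrix field applied to $D_f$. The difference is in how that last equality is established. The paper is constructive: it exhibits the explicit family
\[
X_s=(\partial_x,\partial_y)\pmt{2sf_xf_y& -1-sf_x^2\\ \sqrt{1+s(f_x^2+f_y^2)} & 0}\pmt{d_1\\ d_2}\qquad (0\le s\le 1),
\]
whose matrices have positive determinant for every $s$ and every point of $U$, so that $X_s$ never vanishes on a small circle around $p$; since $X_1=-\mb v_{\tilde A_f}$ and $X_0=-d_2\partial_x+d_1\partial_y$ is the right-angle rotation of $D_f$, homotopy invariance of the rotation index gives the result (together with the trivial remark that $V$ and $-V$ have the same index). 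You instead use soft degree theory: your matrix field $M$ satisfies $\det M=hk>0$, the loop $M\circ\gamma$ extends over the disk bounded by $\gamma$ and is therefore null-homotopic in $GL^+(2,\R)$, and path-connectedness of $GL^+(2,\R)$ lets you deform it through loops to the constant identity loop, transporting the Gauss-map degree unchanged from $D_f$ to $\mb v_{\tilde A_f}$. What your argument buys is generality and transparency: it shows that \emph{any} continuous matrix field with everywhere positive determinant defined on a neighborhood of the disk preserves the index, so the specific interpolation in the paper is immaterial (and the positivity of $\det M$ is identified as the one essential hypothesis --- with negative determinant the index would flip sign). What the paper's explicit $X_s$ buys is self-containedness: it needs nothing beyond homotopy invariance of the degree, with no appeal to the topology of $GL^+(2,\R)$. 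A further small point in your favor: you justify explicitly that $\op{ind}_p(\tilde A_f)=\op{ind}_p(A_f)$ because the positive scalar factor $hk^3$ does not change eigen-directions or equi-diagonal points, a step the paper leaves implicit.
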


\begin{proof}
The half of the index of the vector field
$$
X:= -\mb v_{\hat A_f}=
({2f_xf_yd_1}-h d_2)\partial_x+
k d_1\partial_y
$$
at $p$
is equal to $\op{ind}_{p}(\tilde A_f)$.
We now set 
$$
X_s:=(\partial_x,\partial_y)
\pmt{2sf_xf_y& -1 - s f_x^2\\
\sqrt{1+s(f_x^2+f_y^2)}
 & 0}
\pmt{d_1\\ d_2}
\qquad (0\le s\le 1).
$$
Then $X=X_1$ and 
$X_0=-d_2\partial_x+d_1\partial_y$,
and the rotation index of
$X_s$ at $p$
does not depend on $s\in [0,1]$.
Since the rotation index of
$D_f=(d_1,d_2)$ at $p$
coincides with
that of $X_0$,
we can conclude that
$X$ has the same rotation 
index as $D_f$ at $p$.
\end{proof}

We call $d_1,d_2$
the {\it Cartesian umbilic identifiers} 
of the function $f$.

\begin{example}
For a function 
$
f(x,y):=\op{Re}(z^3)=x^3 - 3 x y^2
$
($z=x+iy$),
the Cartesian umbilic identifiers
are given by
$
d_1=-6y \phi_1,\,\, d_2=
-6x \phi_2,
$
where
$$
\phi_1:=-9 x^4+9 y^4+1,\qquad
\phi_2:=
9 x^4+18 x^2 y^2+9 y^4+2.
$$
Since $\phi_i$ ($i=1,2$)
are positive at the origin $(0,0)$,
the vector field $D_f$
can be continuously deformed into 
the vector field 
$-y\partial_x-x\partial_y$
preserving the property that the origin
is an isolated zero.
Thus $D_f$ is of index $-1$,
and the graph of the function $f$ has an 
isolated umbilic of
index $-1/2$ at the origin.
\end{example}

\begin{example}\label{ex:B2}
Bates' function $B(x,y)$
has no umbilics since $d_1>0$ on $\R^2$.
On the other hand, the 
identifier $d_1$ with respect to
Ghomi-Howard's function $f_{GH}(x,y)$
in \eqref{eq:GH} 
vanishes if and only if  $y=0$ or $x=-y^2$.
Since $d_2$ never vanishes on these two
sets, the graph of $f_{GH}$ also has 
no umbilics on $\R^2$.
\end{example}

\section{The pair of polar identifiers for umbilics}
Let $U$ be a domain in the $xy$-plane, and
$
f:U\to \R
$
a $C^\infty$-function.
Let $(r,\theta)$ be
the polar coordinate system
associated to $(x,y)$ as in \eqref{eq:xy}.
Then 
$$
F(r,\theta):=(r \cos \theta, r\sin \theta, 
f(r \cos \theta,r \sin \theta))
$$
gives a parametrization of the graph of $f$ 
with the unit normal vector 
$$
\nu:=\frac1{\sqrt{f_\theta^2+r^2\left(1+f_r^2\right)}}
\biggl(f_\theta \sin \theta -r f_r\cos \theta,
-r f_r\sin \theta 
-f_\theta \cos \theta,r\biggr).
$$
Then
$
\hat I:=
\pmt{
 1+f_r^2 & f_r f_\theta \\
 f_r f_\theta & r^2+f_\theta^2 
}
$
is the symmetric matrix
consisting of the coefficients of
the first fundamental form of $F$.
If we set
$$
Q=
\pmt{
 0 & \sqrt{1+f_r^2} \\
 -\sqrt{f_\theta^2+r^2\left(1+f_r^2\right)}
/\sqrt{1+f_r^2} &
f_r f_\theta/\sqrt{1+f_r^2} 
},$$
then $Q Q^{T}=\hat I$.
The symmetric matrix
consisting of the coefficients of
the second fundamental form 
is given by
$$
\widehat{I\!I}
:=
\frac1{\sqrt{f_\theta^2+r^2\left(1+f_r^2\right)}}
\pmt{
 rf_{rr}  & r f_{r\theta}-f_\theta \\
 rf_{r\theta}-f_\theta & r (f_{\theta\theta}+r f_r)}.
$$
Then the symmetric matrix
$$
B_f
=Q^{-1}\widehat{I\!I}(Q^{-1})^T
=Q^{T}(\hat I^{-1}\widehat{I\!I})(Q^{T})^{-1}
$$
satisfies
$$
\tilde B_f=\hat h \hat k^3 B_f
=\pmt{
rf_r^2 f_\theta^2 f_{rr} +\hat hf_r  
\left(-2 rf_\theta f_{r\theta} +2 
f_\theta^2+r^2\hat h \right)
+r\hat h^2f_{\theta\theta} 
 & 
\hat l   \hat k\\
\hat l   \hat k
& r \hat k^2f_{rr} 
},
$$
where
$$
\hat h:=1+f_{r}^2,\quad
\hat k:=\sqrt{f_\theta^2+r^2\left(1+f_r^2\right)}, \quad
\hat l
:=
f_{\theta} \left(\hat h+r f_rf_{rr}\right)
-r \hat h f_{r\theta}. 
$$
The following holds. 

\begin{proposition}\label{prop:hatAB}
The symmetric matrix
$\tilde B_f(p)$ is proportional 
to the identity matrix at 
$p\in U\setminus\{o\}$
if and only if $p$ gives 
an umbilic of the graph of $f$. 
Moreover, if $o$ is an isolated umbilic
of the graph of $f$, 
then the index of the umbilic
at $o$ is equal to
$1+\op{ind}_o(\tilde B_f)$.
\end{proposition}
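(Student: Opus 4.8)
The plan is to follow the pattern of the Cartesian Proposition \ref{prop:AB}, treating $\tilde B_f$ in polar coordinates exactly as $A_f$ was treated in Cartesian coordinates, and then to account for the extra $+1$ by a careful change of frame between the $(r,\theta)$-frame and the $xy$-frame. For the equivalence, note that on $U\setminus\{o\}$ we have $r>0$, hence $\hat h=1+f_r^2>0$ and $\hat k=\sqrt{f_\theta^2+r^2(1+f_r^2)}>0$, so $\tilde B_f=\hat h\hat k^3 B_f$ is a strictly positive multiple of $B_f$; thus $\tilde B_f(p)$ is proportional to the identity if and only if $B_f(p)$ is. Since $B_f=Q^{T}(\hat I^{-1}\widehat{I\!I})(Q^{T})^{-1}$, the matrix $B_f$ is similar to the shape operator $\hat I^{-1}\widehat{I\!I}$ (expressed in the $\partial_r,\partial_\theta$ frame), and a matrix is proportional to the identity if and only if a conjugate of it is. Hence $\tilde B_f(p)$ is proportional to the identity if and only if the two principal curvatures agree at $p$, i.e.\ if and only if $p$ is an umbilic. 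This proves the first assertion.

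For the second assertion, assume $o$ is an isolated umbilic and shrink $U$ so that the graph has no umbilic on $U\setminus\{o\}$; fix a small circle $\gamma(t)=\epsilon(\cos t,\sin t)$ $(0\le t\le 2\pi)$, along which $r\equiv\epsilon$ is constant. Let $\mb w_2(t)$ be an eigenvector of $\tilde B_f$ (equivalently of $B_f$) along $\gamma$, and put $(p,q)^{T}:=(Q^{T})^{-1}\mb w_2$; from $B_f=Q^{T}(\hat I^{-1}\widehat{I\!I})(Q^{T})^{-1}$ this $(p,q)^{T}$ is an eigenvector of $\hat I^{-1}\widehat{I\!I}$, i.e.\ the principal direction expressed in the $\partial_r,\partial_\theta$ frame. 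Since $Q$ takes values in the contractible set $\mathcal T$ of \eqref{eq:T}, the loop $t\mapsto Q^{T}(\gamma(t))$ is null-homotopic in $GL(2,\R)$, so exactly as in the proof of Proposition \ref{prop:AB} the rotation index of the line field $(p,q)$ equals that of $\mb w_2$, namely $\op{ind}_o(\tilde B_f)$.

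It remains to pass from the $\partial_r,\partial_\theta$ frame to the $\partial_x,\partial_y$ frame, and this is where the $+1$ enters. The projection to the $xy$-plane sends the principal direction $p\,\partial_r+q\,\partial_\theta$ to the vector $\mb w_1:=J\,(p,q)^{T}$, where $J$ is the Jacobian of $(r,\theta)\mapsto(x,y)$, which factors as
\[
J=\pmt{\cos\theta & -r\sin\theta\\ \sin\theta & r\cos\theta}
=\pmt{\cos\theta & -\sin\theta\\ \sin\theta & \cos\theta}\pmt{1 & 0\\ 0 & r}.
\]
The curvature line flow in the $xy$-plane is the line field $t\mapsto\mb w_1(t)$, whose rotation index is the index of the umbilic at $o$. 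Along $\gamma$ the factor $\pmt{1 & 0\\ 0 & \epsilon}$ is a constant matrix in $GL^{+}(2,\R)$ and hence does not change the rotation index of a line field, while the orthogonal factor winds once as $t$ runs over $[0,2\pi]$ and therefore raises the rotation index of any planar line field by exactly $1$. Consequently
\[
(\text{index of the umbilic at }o)=\op{ind}_o(\mb w_1)=\op{ind}_o(\mb w_2)+1=1+\op{ind}_o(\tilde B_f),
\]
as claimed.

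The main obstacle is the last paragraph: correctly isolating the contribution of the polar-to-Cartesian change of frame to the rotation index. The essential points are the factorization of $J$ into a rotation times $\op{diag}(1,r)$, the fact that multiplying a planar line field by a loop in $GL^{+}(2,\R)$ shifts its rotation index by the winding number of the loop, and that along a circle of constant radius only the orthogonal factor winds, and it winds exactly once, giving the extra $+1$ that is absent in the Cartesian Proposition \ref{prop:AB}. One must also keep the entire computation on the punctured disk, using a circle of fixed radius so that the coordinate degeneracy of $(r,\theta)$ at $o$ plays no role.
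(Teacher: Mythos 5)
Your proposal is correct and follows essentially the same route as the paper's proof: the first assertion via the positive factor $\hat h\hat k^3$ and conjugation, the removal of the $Q$-conjugation using the contractibility of $\mathcal T$, and the extraction of the extra $+1$ from the polar-to-Cartesian change of frame. The only cosmetic difference is that the paper homotopes $T_0$ to the pure rotation matrix $T_1$ through the family $T_s$ along an arbitrary surrounding curve, whereas you factor $T_0$ as a rotation times $\op{diag}(1,r)$ and work on a circle of constant radius so that the diagonal factor is constant; both isolate the same winding contribution of $1$.
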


\begin{proof}
The first assertion follows from the above discussions.
So we now prove the second assertion.
Suppose $o$ is an isolated umbilic.
We take a simple closed smooth curve 
$\gamma(t)$ ($0\le t\le 2\pi$)
in the $xy$-plane
which
surrounds the origin $o$ anti-clockwisely,
and does not surround any other umbilics.
Let 
$\mb w_1:[0,2\pi]\to \R^2$
be a vector field 
along $\gamma$
such that
$\mb w_1(t)$
is an eigen-vector
of the matrix $I^{-1}I\!I$ 
at $\gamma(t)$ for each $t\in [0,2\pi]$.
Since
$$
\partial_r=\cos \theta \partial_x+
\sin \theta \partial_y,\qquad
\partial_\theta=-r \sin \theta \partial_x+
r\cos \theta \partial_y,
$$
we have that
$$
(\partial_r,\partial_\theta)=(\partial_x,\partial_y)
T_0,
\qquad
T_0:=\pmt{\cos \theta& -r \sin \theta 
\\ \sin \theta & r\cos \theta}.
$$
Then, it holds that
$$
\hat I^{-1}\widehat{I\!I}=
(T_0)^{-1}(I^{-1}I\!I) T_0.
$$
In particular,  
$$
\mb w_2(t):=T_0(\gamma(t))^{-1} \mb w_1(t)
\qquad (0\le t \le 2\pi)
$$
gives an eigen-vector of
the matrix $\hat I^{-1}\widehat{I\!I}$ at $\gamma(t)$.
Let $T_s:U\to \op{\it GL}(2,\R)$ ($0\le s \le 1$)
be a map defined by
$$
T_s:=\pmt{\cos \theta& -(r(1-s)+s) \sin \theta \\ 
\sin \theta & (r(1-s)+s)\cos \theta}
\qquad (0\le s \le 1).
$$
Then it gives a continuous deformation of
$T_0$ to the rotation matrix $T_1$.
Since the winding number
of the curve $\gamma(t)$ with respect to
the origin $o$ is equal to $1$,
the difference between the rotation indices of
$\mb w_1$ and $\mb w_2$ is equal to $1$.
Since the eigen-flow of the symmetric matrix 
$\tilde{B}_f$ is associated with that of the 
matrix $\hat I^{-1}\widehat{I\!I}$
by $Q$, the fact that $Q$ 
takes values in the set $\mathcal T$
in Section 3 yields that 
the index of the umbilic $o$
is equal to $1+\op{ind}_{o}(\tilde B_f)$.
\end{proof}

We now set
$$
\delta_1:=-\frac{\tilde b_{12}}{\hat k}=
-f_\theta \left(1+f_r^2+rf_r f_{rr}\right)+
r\left(1+f_r^2\right) f_{r\theta},
$$
where $\tilde B_f=(\tilde b_{ij})_{i,j=1,2}$.
Then we have
$$
\tilde b_{11}-\tilde b_{22}=-2f_rf_\theta \delta_1+
r\left(1+f_r^2\right) \delta_2,
$$
where
$$
\delta_2:=
\left(1+f_r^2\right) (rf_r+f_{\theta\theta})
-f_{rr} \left(r^2+f_\theta^2\right)\!.
$$
Thus, as in the proof 
of Proposition \ref{thm:xy},
we get the following assertion.

\begin{proposition}\label{thm:rt}
Let $U$ be a neighborhood of the origin $o:=(0,0)$.
Let $f:U\to \R$ be a $C^\infty$-function.
Then the graph of $f$
has an umbilic at $p\in U\setminus\{o\}$
if and only if the two functions
$
\delta_1(r,\theta),\,\, \delta_2(r,\theta)
$
both vanish at $p$,
where $x=r\cos \theta$ and
$y=r\sin \theta$.
Moreover, if $o$ is an isolated umbilic, 
then half of the index of the vector field
$$
\Delta_f:=\delta_1\partial_x+\delta_2\partial_y
$$
at $o$
equals $-1+I_f(o)$,
where $I_f(o)$ is the index of
the umbilic $o$.
\end{proposition}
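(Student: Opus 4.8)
The plan is to read off both assertions from Proposition~\ref{prop:hatAB} together with the two relations
$$
\tilde b_{12}=-\hat k\,\delta_1,\qquad
\tilde b_{11}-\tilde b_{22}=-2f_rf_\theta\,\delta_1+r\left(1+f_r^2\right)\delta_2
$$
recorded just before the statement, where $\tilde B_f=(\tilde b_{ij})_{i,j=1,2}$. For the first assertion, fix $p\in U\setminus\{o\}$, where $r>0$. By Proposition~\ref{prop:hatAB}, $p$ is an umbilic precisely when $\tilde B_f(p)$ is a scalar matrix, i.e.\ when $\tilde b_{12}=0$ and $\tilde b_{11}-\tilde b_{22}=0$. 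Since $\hat k=\sqrt{f_\theta^2+r^2(1+f_r^2)}\ge r\sqrt{1+f_r^2}>0$ for $r>0$, the first relation shows $\tilde b_{12}=0$ if and only if $\delta_1=0$; granting $\delta_1=0$, the second relation becomes $\tilde b_{11}-\tilde b_{22}=r(1+f_r^2)\delta_2$, which vanishes if and only if $\delta_2=0$ because $r(1+f_r^2)>0$. Hence $\tilde B_f(p)$ is scalar exactly when $\delta_1(p)=\delta_2(p)=0$, which is the first claim.

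For the index formula I would imitate the homotopy used in the proof of Proposition~\ref{thm:xy}. Assume $o$ is an isolated umbilic and take the circle $\gamma(t)=\epsilon(\cos t,\sin t)$, on which $r=\epsilon>0$ is constant and which encloses no other umbilic, so that $(\delta_1,\delta_2)$ is nowhere zero along $\gamma$. The two relations above read
$$
\mb v_{\tilde B_f}=(\partial_x,\partial_y)\,M\,\pmtt{\delta_1}{\delta_2},
\qquad
M:=\pmt{-2f_rf_\theta & r(1+f_r^2)\\ -\hat k & 0},
$$
where $\mb v_{\tilde B_f}$ is the field attached to $\tilde B_f$ by \eqref{eq:va}. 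I then set
$$
M_s:=\pmt{-2sf_rf_\theta & sr(1+f_r^2)+(1-s)\\ -\bigl(s\hat k+(1-s)\bigr) & 0}
\qquad(0\le s\le 1),
$$
so that $M_1=M$ and $M_0=\pmt{0&1\\-1&0}$. Along $\gamma$ we have $\det M_s=\bigl(sr(1+f_r^2)+(1-s)\bigr)\bigl(s\hat k+(1-s)\bigr)>0$, because $r=\epsilon>0$ makes both factors strictly positive; hence the field $(\partial_x,\partial_y)M_s(\delta_1,\delta_2)^T$ never vanishes on $\gamma$ and its rotation index is independent of $s$.

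At $s=1$ this field is $\mb v_{\tilde B_f}$, and at $s=0$ it is $\delta_2\partial_x-\delta_1\partial_y$, which differs from $\Delta_f$ by the fixed orthogonal matrix $\pmt{0&1\\-1&0}$ and so has the same rotation index as $\Delta_f$. Therefore $\op{ind}_o(\mb v_{\tilde B_f})=\op{ind}_o(\Delta_f)$. Feeding this into \eqref{eq:pa}, i.e.\ $\op{ind}_o(\tilde B_f)=\tfrac12\op{ind}_o(\mb v_{\tilde B_f})$, and into the identity $I_f(o)=1+\op{ind}_o(\tilde B_f)$ of Proposition~\ref{prop:hatAB}, I obtain $\tfrac12\op{ind}_o(\Delta_f)=I_f(o)-1=-1+I_f(o)$, which is the asserted relation. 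The whole argument is routine bookkeeping except for one point that must be checked with care: the strict positivity of $\det M_s$ along $\gamma$, which is exactly what guarantees that $o$ stays an isolated zero throughout the deformation and hence that the rotation index is genuinely invariant.
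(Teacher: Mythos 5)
Your proof is correct and follows essentially the same route as the paper: the paper's own proof consists of the two displayed relations $\tilde b_{12}=-\hat k\,\delta_1$ and $\tilde b_{11}-\tilde b_{22}=-2f_rf_\theta\delta_1+r(1+f_r^2)\delta_2$ together with the remark ``as in the proof of Proposition \ref{thm:xy},'' i.e.\ exactly the matrix homotopy you carry out (your linear interpolation $M_s$ is a harmless variant of the paper's interpolation in Proposition \ref{thm:xy}), combined with \eqref{eq:pa} and Proposition \ref{prop:hatAB}. Your verification of $\det M_s>0$ along $\gamma$ is precisely the detail the paper leaves implicit, so nothing is missing.
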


We call $\delta_1,\delta_2$
the {\it polar umbilic identifiers} 
of the function $f$.

\begin{example}
Consider the function
($z=x+iy$)
$$
f(x,y):=\op{Re}(z^2 \bar z )=
x^3 + x  y^2=r^3 \cos \theta.
$$
By straightforward calculations, we have
$$
\delta_1=-2 r^3 \sin \theta,
\qquad
\delta_2=-2 r^3  
\left(2-3 r^4-6 r^4 \cos2 \theta\right)\cos \theta.
$$
Since
$
2-3 r^4-6 r^4 \cos2 \theta
$
is positive
for sufficiently small $r>0$,
the vector field $\Delta_f$ can be
continuously deformed into 
the vector field
$
-\sin \theta\partial_r-\cos \theta\partial_\theta
$
preserving the property that the origin is
an isolated zero.
Thus the rotation index 
of $\Delta_f$ at $o$ is equal to $-1$,
and 
$
I_f(o)=1-1/2=1/2.
$
\end{example}

We give a modification of
Proposition \ref{thm:rt} for
the computation of index of
the curvature line flow of a surface
along an arbitrarily given
simple closed curve surrounding
the origin as follows.
Let $z=f(x,y)$ be a $C^\infty$-function
defined on $\R^2$ admitting only
isolated umbilics.
Suppose that $\gamma:\R\to \R^2$
be a $C^\infty$-map
satisfying $\gamma(t+2\pi)=\gamma(t)$
which gives a simple closed 
curve in the $xy$-plane such that
it surrounds a bounded domain
containing the origin $o$ anti-clockwisely.
Moreover, we assume  that
$\gamma(t)$ does not pass through 
any points corresponding 
to umbilics of the graph of $f$.
We denote by $I_f(\gamma)$ 
(resp. $\op{ind}_{\gamma}(\Delta_f)$)
the rotation index 
of the curvature line flow
(resp. of the vector field $\Delta_f$)
along the simple closed curve $\gamma$.
Then the formula
\begin{equation}\label{eq:general}
I_f(\gamma)=1+\frac{\op{ind}_{\gamma}(\Delta_f)}2
\end{equation}
can be proved by modifying the proof of
Proposition \ref{thm:rt}.
Suppose that there exist at most
finitely many
points
$
t=t_1,\dots,t_k \in [0,2\pi]
$
such that $\delta_1(\gamma(t))$ vanishes
at $t=t_j$.
We now assume that 
$\delta'_1(\gamma(t)):=d\delta_1(\gamma(t))/dt$ 
does not vanish at $t=t_j$ ($j=1,\dots,k$).
We set
$$
\epsilon(t_j)=
\begin{cases}
0 & (\mbox{$\delta_2(\gamma(t_j))<0$}), \\
1 & (\mbox{$\delta'_1(\gamma(t_j))>0$ and 
$\delta_2(\gamma(t_j))>0$}), \\
-1 & (\mbox{$\delta'_1(\gamma(t_j))<0$ and 
$\delta_2(\gamma(t_j))>0$}). 
\end{cases}
$$
Then, it holds that
\begin{equation}\label{eq:count}
\op{ind}_{\gamma}(\Delta_f)
=-\sum_{j=1}^k\epsilon(t_j).
\end{equation}

\begin{figure}[htb]
 \begin{center}
 \includegraphics[height=4.9cm]{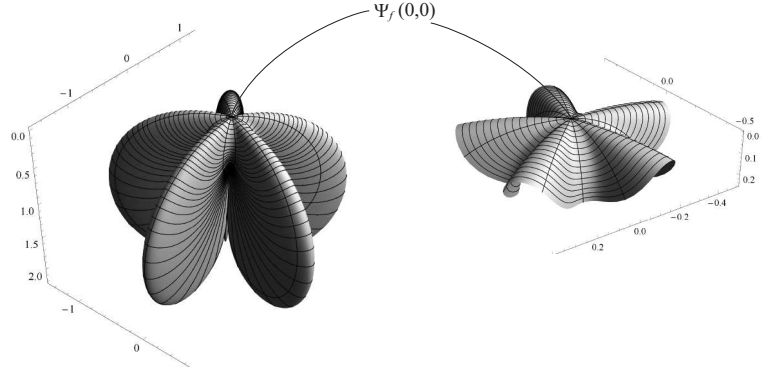}
\caption{The inversion of 
the graph $f_5$ for $a=1/5$ (left) 
and its enlarged view (right).
In these two figures,
the $z$-axis points toward the downward direction.}
\label{fig:01}
 \end{center}
\end{figure}

\section{Proof of the main theorem}
In this section, using
the function $f=f_m$
($m=1,2,3,\dots$) given
in \eqref{eq:tanh},
we prove Theorem \ref{thm:C1} and
Corollary \ref{cor:C1}
in the introduction.
More generally, we consider the
function
\begin{equation}
\label{eq:gm}
(g:=)g_m(r,\theta):=1+F(r^a \cos m \theta)
\qquad (0<a<1/4,\,\,\,m=1,2,3,\dots) ,
\end{equation}
which is defined on
$\{(r,\theta)\,;\,r>R\}$,
where $R$ is 
an arbitrarily fixed positive number,
and $F:\R\to \R$ is a bounded $C^\infty$-function
satisfying the following conditions:
\begin{enumerate}
\item[(i)] $F(x)$ is an odd function,
that is, it satisfies $F(-x)=-F(x)$,
\item[(ii)] the derivative $F'(x)$ of $F$
is a positive-valued bounded function on $\R$,
\item[(iii)]
the second derivative $F''(x)$
is a bounded function on $\R$
such that $F''(x)<0$ for $x>0$,
\item[(iv)]
there exist three constants $\alpha,\beta$ and $\gamma$
($\beta\ne 0,\,\,\gamma>0$)
such that
$$
\lim_{x\to \infty}e^{\gamma x}F'(x)=\alpha,\qquad
\lim_{x\to \infty}e^{\gamma x}F''(x)=\beta.
$$
\end{enumerate}
One can easily construct
a bounded $C^\infty$-function $F(x)$ satisfying the
properties (i-iv). For example,
one can construct
an odd $C^\infty$-function
satisfying (ii) and (iii) so that
$$
F(x)=1-e^{-x} \qquad (x\in  [M,\infty)).
$$ 
Then it satisfies also (iv).
However, to prove 
Theorem \ref{thm:C1}, we must choose
the function $F(x)$ to be
real-analytic, and
$$
F(x):=\tanh x
$$
satisfies all of the properties
required.
From now on, we shall
prove Theorem \ref{thm:C1}
and Corollary \ref{cor:C1}
using only
the above four properties of $F(x)$.

\medskip
The function $g$ can be considered 
as a $C^\infty$-function on $\R^2\setminus \Omega_R$
in the $xy$-plane for any $R>0$.
The graph of $g$ lies 
between two parallel planes
orthogonal to the $z$-axis,
and is symmetric under 
rotation by the angle $2\pi/m$
with respect to the $z$-axis
(the entire figure of the inversion of
the graph of 
$f_5$ is given in 
the left-hand side of
Figure \ref{fig:01}).
The partial derivatives of
the function $g$ are given by
\begin{align}\label{eq:diff-f}
g_r&=
a r^{a-1} c_m F'\left(r^a c_m\right),\qquad
g_\theta=
-m r^a s_m F'\left(r^a c_m\right),
\\
\nonumber
g_{rr}&=
a r^{a-2} c_m \biggl(a r^a c_m F''\left(r^a c_m\right)
+(a-1) F'\left(r^a c_m\right)\biggr), \\
\nonumber
g_{r\theta}&=
-a m r^{a-1} s_m \biggl(r^a c_m F''\left(r^a c_m\right)
+F'\left(r^a c_m\right)\biggr),
\\
\nonumber
g_{\theta\theta}&=
m^2 r^a \biggl(r^a s_m^2 F''\left(r^a c_m\right)
-c_m F'\left(r^a c_m\right)\biggr),
\end{align}
where
\begin{equation}\label{eq:cmsm}
c_m:=\cos m \theta,\qquad s_m:=\sin m\theta.
\end{equation}
Since $F(x)$ is a bounded function,
$g$ is bounded and 
satisfies \eqref{eq:fr}, since $a<2$.
Therefore,
the inversion $\Psi_g$
can be expressed as a graph near $(0,0,0)$.
Since $0<a<1$, the function $g$ satisfies (a) and (b) of
Proposition \ref{prop:a}.
Then $Z=Z_f(X,Y)$ as in \eqref{eq:zz} with $f:=g$
is a $C^1$-function at $(0,0)$.
The graph of $Z_g$ for $g=f_5$
near $(0,0,0)$
is indicated in the right-hand side of
Figure \ref{fig:01}.
To prove Theorem \ref{thm:C1},
 it is sufficient to
show that 
$(0,0,0)$ is a $C^1$-umbilic of the
graph of $Z_g(X,Y)$ with index $1+(m/2)$.
In the following discussions, 
we would like to show that
there exists a positive number $R$ such that
the graph of $g$ has no umbilics
if $r>R$. We then
compute the index  $I_g(\Gamma)$
with respect to the 
circle
\begin{equation}\label{eq:Gamma}
\Gamma(\theta):=(r \cos \theta,r \sin \theta)
\qquad (0\le \theta\le 2\pi,\,\, r>R),
\end{equation}
using \eqref{eq:general} and \eqref{eq:count},
which does not depend on the choice of $r(>R)$,
as follows. We set
\begin{equation}\label{eq:delta12}
\check \delta_j(\theta):=\delta_j(\Gamma(\theta))
\qquad
(j=1,2).
\end{equation}
The first polar identifier is given by
\begin{equation}\label{eq:delta1g}
\delta_1=
-m r^a s_m \biggl(a r^a c_m F''\left(r^a c_m\right)
+(a-1) F'\left(r^a c_m\right)\biggr).
\end{equation}
Since $0<a<1$, the condition (ii)
yields that
\begin{equation}\label{eq:F1}
(a-1) F'\left(r^a c_m\right)<0.
\end{equation}
On the other hand, by (i) and (iii),
it holds that
\begin{equation}\label{eq:F2}
xF''(x) \le 0 \qquad (x:=r^ac_m).
\end{equation}
By
\eqref{eq:F1} and \eqref{eq:F2},
we can conclude that
$\check \delta_1(\theta)$ changes sign
only at the zeros of the
function $\sin m\theta$.
Since the function $g$ is symmetric with
respect to rotation by angle $2\pi/m$,
to compute the rotation index of $\Delta_g$
along $\Gamma$, it is sufficient to 
check the sign changes of $\check\delta_i(\theta)$
($i=1,2$) for $\theta=0$ and $\theta=\pi/m$.
By
\eqref{eq:delta1g},
\eqref{eq:F1} and \eqref{eq:F2},
we get the following:
\begin{equation}\label{eq:d1-f}
\left.\frac{d\check \delta_1}
{d\theta}\right|_{\theta=0}>0,\qquad
\left.\frac{d\check \delta_1}
{d\theta}\right|_{\theta=\pi/m}<0.
\end{equation}
The second polar identifier
$\delta_2$ is given by
\begin{align*}
r^{2-3a}\delta_2
&=
-r^{2-a} \left(a^2 c_m^2-m^2 s_m^2\right) 
F''\left(c_m r^a\right) \\
&\phantom{aaaaaaa}+a c_m  \left(a^2 c_m^2-a m^2+m^2 s_m^2\right) F'\left(c_m r^a\right)^3\\
&\phantom{aaaaaaaaaaaa}
-c_m r^{2-2a} \left(a^2-2 a+m^2\right) F'\left(c_m r^a\right).
\end{align*}
We need the sign of $\check \delta_2(\theta)$
at $\theta\in (\pi/m)\Z$.
In this case, $s_m=0$ and $c_m=\pm 1$.
Substituting these relations
and using the fact that
$F'$ (resp. $F''$)
is an even 
function (resp. an odd function),
 we have
\begin{align*}
r^{2-3a}\delta_2
&=
\mp r^{2-a} a^2 
F''\left(r^a\right) \\
&\pm a^2 \left(a-m^2\right) F'\left(r^a\right)^3
\mp r^{2-2a} \left(a^2-2 a+m^2\right) 
F'\left(r^a\right).
\end{align*}
Since $F'$ is bounded, 
the middle term is bounded.
Hence, by (iv) and by 
the fact that $0<a<1$,
there exists a positive number $R$
such that
the sign of $\delta_2$ 
is determined by
the sign of the first term
$\mp r^{2-a} a^2 
F''\left(r^a\right)$ whenever $r>R$.
Then, we have
\begin{equation}\label{eq:pmd2}
-\check \delta_2(\pi/m)=\check \delta_2(0)>0.
\end{equation}
In particular, the image of the graph of $g$
has no umbilics when $r>R$.
By the $2\pi/m$-symmetry of $g$,
\eqref{eq:count},
\eqref{eq:d1-f}, and \eqref{eq:pmd2},
the index $\op{ind}_{\Gamma}(\Delta_g)$
is equal to $-m$.
Then the index of the curvature line flow
along $\Gamma$ is equal to $I_g(\Gamma)=1-m/2$
by \eqref{eq:general}.
Then after inversion,
the Poincar\'{e}-Hopf index formula
yields that
the index $I_0$ of the umbilic of $\Psi_g$ 
at the origin is
$$
I_0=2-I_g(\Gamma)=1+m/2.
$$
If we choose $F(x):=\tanh x$, 
then the function
$Z_g(X,Y)$ satisfies the properties of
Theorem \ref{thm:C1}.

\medskip
We next prove the corollary.
We set
\begin{equation}\label{eq:lambdaXY}
\lambda:=
\frac{Z\sqrt{1+Z_X^2+Z_Y^2}}
{1+\sqrt{1+Z_X^2+Z_Y^2}},
\end{equation}
where $Z:=Z_g$ is the function
given in \eqref{eq:zz}.
Suppose that 
$\lambda$ and $\lambda \nu$
are a $C^1$-function and a $C^1$-vector field
defined on a sufficiently small neighborhood
of $(X,Y)=(0,0)$, respectively, where
$\nu$ is a unit normal vector field of the graph of $Z_g$.
Then the map
$$
\Phi:(X,Y) \mapsto (\xi(X,Y),\eta(X,Y))
$$
given by \eqref{eq:Phi}
for $f=Z_{f_m}$ is a local
$C^1$-diffeomorphism,
and is real-analytic on $U\setminus\{(0,0)\}$.
Then the proof of Fact A.1 in the appendix
is valid in our 
situation, and we can conclude that
the eigen-flow of the Hessian matrix of
$\lambda(\xi,\eta)$ is equal to the
curvature line flow of the 
map $P(\xi,\eta)$ given by (A.8).
Since the image of $P(\xi,\eta)$ 
coincides with that of $\Psi_{f_m}(u,v)$,
we get the
proof of the corollary in the introduction.

Thus, it is sufficient to show that
$\lambda$ and $\lambda \nu$
are $C^1$ at $(X,Y)=(0,0)$.
By \eqref{eq:lambdaXY}, we have the following expression
\begin{equation}\label{eq:lnu}
\lambda\nu=\frac{(ZZ_X,ZZ_Y,-Z)}{1+\sqrt{1+Z^2_X+Z^2_Y}}.
\end{equation}
By \eqref{eq:lambdaXY} and \eqref{eq:lnu},
we can conclude that
$\lambda(X,Y)$ and $\lambda(X,Y)\nu(X,Y)$
are $C^1$ at $(0,0)$ if 
\begin{equation}\label{eq:limitF}
\lim_{(X,Y)\to (0,0)}Z Z_{XX}=
\lim_{(X,Y)\to (0,0)}Z Z_{XY}=\lim_{(X,Y)\to (0,0)}Z Z_{YY}=0
\end{equation}
hold.
So to prove the corollary, it is sufficient
to show \eqref{eq:limitF}.
It can be easily seen that all of
$r^{1-a}g_r,r^{-a}g_\theta,r^{2-2a}g_{rr}$,
$r^{1-2a}g_{r\theta}$ 
and $r^{-2a}g_{\theta\theta}$
are bounded functions 
on $\R^2\setminus \Omega_R$.
Since $0<a<1/4$,
Proposition 2.7 yields that
the map $(u,v)\mapsto (X,Y)=\Pi\circ \Psi_g(u,v)$ 
is a $C^2$-map.
Then \eqref{eq:limitF} is equivalent to
\begin{equation}\label{eq:limitF2}
\lim_{(u,v)\to (0,0)}Z Z_{uu}=
\lim_{(u,v)\to (0,0)}Z Z_{uv}=\lim_{(u,v)\to (0,0)}Z Z_{vv}=0.
\end{equation}
Since $Z=h/(k+1)$, \eqref{eq:limitF2} follows from \eqref{eq:00},
\eqref{eq:lim-k} and the fact that
$$
\lim_{\rho\to 0}\rho h_{uu}=
\lim_{\rho\to 0}\rho h_{uv}=
\lim_{\rho\to 0}\rho h_{vv}
=0.
$$

\section{An alternative proof of the main theorem}
\label{rmk:fin}

In the previous section, we have proved
Corollary \ref{cor:C1}.
However, it is natural to expect that
one can give an explicit description of the
function with the desired properties.
The function $\lambda$ given in \eqref{eq:lambdaXY} does not have a simple
expression. On the other hand, we will see that functions
\begin{equation}\label{eq:Lambda}
(\Lambda=)=\Lambda_m:=
r^{2}\tanh(r^{-a}\cos m\theta)
\qquad (m=1,2,3,\dots)
\end{equation}
satisfy (1) and (2) of
Corollary \ref{cor:C1} if $0<a<1$.
We set
$$
(\lambda:=)\lambda_m:=r^2F(r^{-a}\cos m \theta),
$$
where $\xi=r\cos \theta,\,\,\eta=r\sin \theta$,
and $F:\R\to \R$ is a function satisfying 
the properties (i--iv)
given in the beginning of Section 5.
Then $\Lambda_m$ is a special case of
$\lambda_m$ for $F(x):=\tanh x$.
It holds that
\begin{align*}
\lambda_r&=
r \biggl(2 F(r^{-a}c_m)-ac_m r^{-a} F'(r^{-a}c_m )\biggr),
\quad
\lambda_\theta:=
-m r^{2-a} s_m F'\left(r^{-a}c_m\right),\\
\lambda_{rr}&=2 F(r^{-a} c_m)+ar^{-2 a}c_m 
\biggl((a-3) r^a F'(r^{-a} c_m)+a c_m F''(r^{-a} c_m)\biggr), \\
\lambda_{r\theta}&=
ms_m r^{1-2 a}\biggl((a-2) r^a F'(r^{-a} c_m)+a c_m F''(r^{-a} c_m)\biggr), \\
\lambda_{\theta\theta}&=
-m^2 r^{2-2 a} \biggl(r^a c_m F'(r^{-a} c_m)
-s_m^2 F''(r^{-a} c_m)\biggr),
\end{align*}
where $c_m$ and $s_m$ are defined in \eqref{eq:cmsm}.
We set
$$
\zeta_1:=2(r \lambda_{r\theta}-\lambda_\theta),\qquad
\zeta_2:=-r^2 \lambda_{rr}+r \lambda_r+\lambda_{\theta\theta}.
$$
Then each component of
the vector field 
$\delta_\lambda:=\zeta_1\partial_x+\zeta_2\partial_y$
is an identifier for the eigen-flow of the Hessian matrix of $\lambda$
at the origin given in the introduction (cf. \eqref{eq:id1}).
By a direct calculation, we have
\begin{align*}
\zeta_1&=
2m r^{2-2 a} s_m \biggr(a c_m F''(r^{-a} c_m)+(a-1) r^a F'(r^{-a} c_m)\biggr),\\
\zeta_2&=
-r^{2-2 a}(a^2 c_m^2-m^2 s_m^2) F''(r^{-a}c_m)-(a^2-2 a+m^2)
r^{2-a} c_m F'(r^{-a} c_m).
\end{align*}
By the property (ii) of $F$, $(a-1)r^a F'(r^{-a}c_m)$ is negative, and
by (ii) and (iii), $c_m F''(r^{-a}c_m)$
is also negative.
So $\zeta_1$ is positively proportional to
$-s_m(=-\sin m\theta)$. In particular,
$\zeta_1$ vanishes only when $s_m=0$.
Moreover, for fixed $r$, it holds that 
$d\zeta_1/d\theta<0$ 
(resp. $d\zeta_1/d\theta>0$)
if $c_m=1$
(resp. $c_m=-1$).

On the other hand,
if $s_m=0$ and $r$ tends to zero, then
$c_m=\pm 1$ and
$F'(\pm r^{-a})$ and $F''(\pm r^{-a})$  
tend to zero with exponential order (cf. 
the condition (iv) for $F(x)$).
Therefore,  the leading term of $\zeta_2$ for small $r$ is 
$-r^{2-2 a}(a^2 c_m^2-m^2 s_m^2) F''(r^{-a}c_m)$.
Hence,
for a fixed sufficiently small $r$,
the function $\zeta_2$ is positive (resp. negative)
if $c_m=1$ (resp. $c_m=-1$).
Summarizing these facts, one can easily show that
the index of the vector field $\delta_\lambda$
at $o:=(0,0)$ is equal to $m$.
So the index of the eigen-flow of the
Hessian matrix of $\lambda$ at $o$
is equal to $1+m/2$ (cf. Appendix B). 
On can easily check that
$\lambda$ is a $C^1$-function at $o$ and
the function $\lambda$ satisfies (1) and (2)
of Corollary \ref{cor:C1}.
Since $\Lambda$ is a special case of $\lambda$,
we proved that $\Lambda$ satisfies the 
desired properties.

\begin{figure}[htb]
 \begin{center}
 \includegraphics[height=3.9cm]{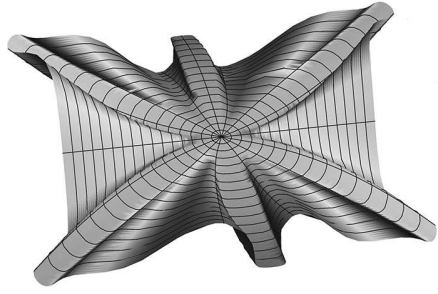}
\caption{The image of
$P$ ($r\le 1/2$)
for $m=2$ and $a=1/2$.
}
\label{fig:02}
 \end{center}
\end{figure}

To give an alternative proof of Theorem 1.1,
we consider the real analytic map $P:\R^2\setminus\{o\}
\to \R^3$ 
defined by (cf. \eqref{eq:P2})
$$
P(\xi,\eta):=(\xi,\eta,\Lambda(\xi,\eta))-\Lambda(\xi,\eta) 
\nu(\xi,\eta),
$$
where
\begin{equation}\label{eq:nu00}
\nu:=
\frac1{\Lambda_\xi^2+\Lambda_\eta^2+1}
(2\Lambda_\xi,2\Lambda_\eta,\Lambda_\xi^2+\Lambda_\eta^2-1).
\end{equation}
One can easily verify that
\begin{align*}
\Lambda_\xi&=
r^{1-a} \biggl((m s_1s_m-a c_1 c_m) \text{sech}^2\left(r^{-a} c_m\right)
+2 r^a c_1 \tanh \left(r^{-a} c_m\right)\biggr),\\
\Lambda_\eta&=
r^{1-a} \biggl(2 r^a s_1 \tanh \left(r^{-a} c_m\right)-(a s_1 c_m+m c_1 
s_m) \text{sech}^2\left(r^{-a} c_m\right)
\biggr),
\end{align*}
where $c_1=\cos \theta$ and $s_1=\sin \theta$.
Using them, one can get the following expressions
\begin{equation}\label{eq:hii}
\Lambda_{\xi\xi}=
\frac{1}{r^{2a}} h_1(r,\theta),\quad
\Lambda_{\xi\eta}=
\frac{1}{r^{2a}} h_2(r,\theta),\quad
\Lambda_{\eta\eta}=
\frac{1}{r^{2a}} h_3(r,\theta),
\end{equation}
where $h_i(r,\theta)$ ($i=1,2,3$)
are continuous functions
defined on $\R^2$.
Using \eqref{eq:nu00}, \eqref{eq:hii} and the fact $\lim_{r\to 0}\Lambda/r^{2a}=0$,
we have 
\begin{equation}\label{eq:L1}
\lim_{r\to 0}\Lambda \nu_{\xi}=\lim_{r\to 0}\frac{\Lambda}{r^{2a}}(r^{2a}\nu_{\xi})
=0,
\end{equation}
and also
\begin{equation}\label{eq:L2}
\lim_{r\to 0}\Lambda \nu_{\eta}=0.
\end{equation}
Using
\eqref{eq:L1}, \eqref{eq:L2}
and the fact 
$$
d(\Lambda \nu)=(d \Lambda) \nu+ \Lambda d\nu,
$$
we can conclude that
$\Lambda\nu$ can be extended as a $C^1$-function at $o$.
Thus  $P(\xi,\eta)$ can be also
extended as a $C^1$-differentiable map at $o$.
One can also easily check that
$$
P_\xi(0,0)=(1,0,0),\qquad P_\eta(0,0)=(0,1,0).
$$
Hence $P$ is an immersion at $o$, and 
$$
\Phi:(\xi,\eta)\mapsto (X(\xi,\eta),Y(\xi,\eta))
$$ 
is a local $C^1$-diffeomorphism,
where $P=(X,Y,Z)$.
In particular, 
$$
Z_\Lambda:=Z(\Phi^{-1}(X,Y))
$$ gives
a function  defined on a neighborhood of $(X,Y)=(0,0)$.
By Fact A.1 in the appendix,
the index of the curvature line flow at $(0,0)$
of the graph of $Z_\Lambda$ is equal to
the index of the eigen-flow of the Hessian matrix  of 
$\Lambda$, which implies Theorem \ref{thm:C1}.
The image of $P$ for $m=3$ and $a=1/2$ is 
given in Figure \ref{fig:02}.

\section{The duality of indices}
\label{rmk:fin2}
At the end of this paper, 
we consider the index at infinity
for eigen-flows of Hessian matrices.
Let
$$
f:\R^2\setminus \Omega_R\to \R,\qquad
 g:U_{1/R} \setminus \{ o\}\to \R
$$
be $C^2$-functions, 
where $\Omega_R$ and $U_{1/R}$
are disks defined in Section 2. 
Let $\mathcal H_f$ (resp. $\mathcal H_g$)
be the eigen-flow of the Hessian matrix of $f$
(resp. $g$). 
If the Hessian matrix of $f$ has no equi-diagonal points, then we can
consider the index ${\rm ind}\,(\mathcal{H}_f , \Gamma )$ with respect
to the circle $\Gamma$ given in (5.4) and it is independent of the
choice of $r>R$. So we denote it by $\op{ind}_\infty(\mathcal{H}_f)$.
Similarly, if the Hessian matrix of $g$ has no equi-diagonal points,
then we can consider the index ${\rm ind}\,(\mathcal{H}_g , \Gamma' )$
with respect to the circle $\Gamma'(\theta ):=(\rho \cos \theta , \rho
\sin \theta )$ ($0\leq \theta \leq 2\pi$, $\rho <1/R$). Since it is
independent of the choice of $\rho <1/R$,
we denote it by $\op{ind}_o(\mathcal{H}_g)$.
Consider the plane-inversion
$$
\iota:\R^2\in (u,v)\mapsto \frac1{u^2+v^2}(u,v)\in \R^2.
$$
Then the following assertion holds.

\begin{proposition}[The duality of indices]
\label{prop:d}
Let $f:\R^2\setminus \Omega_R\to \R$
be a $C^2$-function 
whose Hessian matrix
has no equi-diagonal points.
Then the function $g:\Omega_R\to \R$
defined by
$$
g(x,y):=(u^2+v^2)f\circ \iota(u,v)
$$
(called the {\it dual} of $f$)
 satisfies
$$
\op{ind}_o(\mathcal H_{g})+\op{ind}_\infty(\mathcal H_f)=2.
$$
\end{proposition}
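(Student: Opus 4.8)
The plan is to reduce the identity to a single relation between the polar identifiers (in the sense of \eqref{eq:id1}) of $f$ and of its dual $g$, and then to invoke the index formula \eqref{eq:general} --- in its version for the eigen-flow of a Hessian matrix (Appendix B) --- at both endpoints. First I would pass to polar form. By \eqref{eq:uvxy} we have $\rho=1/r$ with a common angular parameter $\theta$, so the dual becomes $g(\rho,\theta)=\rho^{2}f(1/\rho,\theta)$, which is exactly the function $h=\rho^{2}\hat f$ of \eqref{eq:hk}. For the eigen-flow of the Hessian I use the polar identifier $\delta_f=\zeta_1^{f}\partial_x+\zeta_2^{f}\partial_y$ of \eqref{eq:id1}, with $\zeta_1^{f}=2(rf_{r\theta}-f_\theta)$ and $\zeta_2^{f}=-r^{2}f_{rr}+rf_r+f_{\theta\theta}$, and the analogous $\delta_g=\zeta_1^{g}\partial_x+\zeta_2^{g}\partial_y$ written with $\rho$ in place of $r$.

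The computational core is to differentiate $g(\rho,\theta)=\rho^{2}f(1/\rho,\theta)$ and substitute. Using $\partial_\rho=-r^{2}\partial_r$ one gets $g_\rho=2\rho f-f_r$, $g_\theta=\rho^{2}f_\theta$, $g_{\rho\rho}=2f-2rf_r+r^{2}f_{rr}$, $g_{\rho\theta}=2\rho f_\theta-f_{r\theta}$ and $g_{\theta\theta}=\rho^{2}f_{\theta\theta}$; after the cancellations these collapse to the clean identities
\begin{equation*}
\zeta_1^{g}=-\rho^{2}\zeta_1^{f},\qquad \zeta_2^{g}=\rho^{2}\zeta_2^{f},
\end{equation*}
so that $\delta_g=\rho^{2}\bigl(-\zeta_1^{f}\partial_x+\zeta_2^{f}\partial_y\bigr)$. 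The hypothesis that the Hessian of $f$ has no equi-diagonal points means $\delta_f$ never vanishes on $\R^{2}\setminus\Omega_R$; since $\rho^{2}>0$, the same holds for $\delta_g$ on the punctured disk, so $o$ is an isolated equi-diagonal point of $g$ and $\op{ind}_o(\mathcal H_g)$ is well defined.

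Now I read off indices. Both $\Gamma$ and $\Gamma'$ are traversed as $\theta$ increases from $0$ to $2\pi$; the positive factor $\rho^{2}$ has no effect on winding numbers, while the map $(\zeta_1^{f},\zeta_2^{f})\mapsto(-\zeta_1^{f},\zeta_2^{f})$ is an orientation-reversing reflection. Hence
\begin{equation*}
\op{ind}_{\Gamma'}(\delta_g)=-\op{ind}_{\Gamma}(\delta_f).
\end{equation*}
The Hessian analog of \eqref{eq:general} applies at the finite origin, giving $\op{ind}_o(\mathcal H_g)=1+\tfrac12\op{ind}_{\Gamma'}(\delta_g)$; the identical frame argument (the polar frame $T_0$ winds once about the origin of the $xy$-plane) applies along the large circle $\Gamma$, giving $\op{ind}_\infty(\mathcal H_f)=\op{ind}(\mathcal H_f,\Gamma)=1+\tfrac12\op{ind}_{\Gamma}(\delta_f)$. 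Adding these and using the displayed sign relation, the two half-index terms cancel and the two $+1$'s survive, which yields $\op{ind}_o(\mathcal H_g)+\op{ind}_\infty(\mathcal H_f)=2$.

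The step I expect to demand the most care is the last one: verifying that the $+1$ correction in the identifier-to-eigen-flow formula is present at infinity in precisely the same way as at a finite isolated singularity --- so that the two corrections are exactly what produce the final $2$ --- together with the orientation bookkeeping that converts the single sign change $\zeta_1^{g}=-\rho^{2}\zeta_1^{f}$ into the negation of the rotation index. The intermediate algebra is routine, but it must be carried out carefully to exhibit the cancellations that leave the common positive factor $\rho^{2}$ and exactly one sign flip.
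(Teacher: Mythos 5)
Your proof is correct. I checked the computational core: with $g(\rho,\theta)=\rho^{2}f(1/\rho,\theta)$ one does get $\zeta_1^{g}=-\rho^{2}\zeta_1^{f}$ and $\zeta_2^{g}=\rho^{2}\zeta_2^{f}$, the reflection negates the rotation index, and the ``$+1$'' of Lemma \ref{fact:k2} is indeed available along circles of any radius, since it comes from the factor $e^{2i\theta}$ in $\overline{g_{zz}}=\tfrac{e^{2i\theta}}{4r^{2}}\,i(\zeta_1+i\zeta_2)$, which winds twice per revolution whether the circle is small about $o$ or large. The paper reaches the same end by a more compact, complex-variable route: writing $\iota(z)=1/\bar z$ and $g(z,\bar z)=z\bar z f(1/\bar z,1/z)$, it derives the single chain-rule identity $g_{zz}=\bar z\,f_{\bar z\bar z}(1/\bar z,1/z)/z^{3}$, so that along the circle $g_{zz}$ acquires the factor $e^{-4i\theta}/r^{2}$; the resulting $-4$, the sign flip from conjugation $f_{zz}\mapsto f_{\bar z\bar z}$, and \eqref{eq:B1} give the formula at once. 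Your two real identities are precisely the real and imaginary parts of that complex identity after stripping the $e^{2i\theta}$ factors at both ends, so the content is equivalent; your version makes the cancellation mechanism (one sign flip, two $+1$'s) explicit and, as a bonus, verifies that $\delta_g\neq 0$ on the punctured disk so that $\op{ind}_o(\mathcal H_g)$ is well defined --- a point the paper passes over --- while the paper's version avoids computing five polar derivatives. One caution: your argument needs the sign convention of \eqref{eq:id1} (the $+f_{\theta\theta}$ one), which is what you used; the variant \eqref{eq:Delta2} in Appendix B carries a sign typo on $g_{\theta\theta}$, and Lemma \ref{fact:k2} holds only for the convention consistent with \eqref{eq:id1}.
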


\begin{proof}
Using the identification of $(u,v)$ and $z=u+iv$,
it holds that
$u=(z+\bar z)/2$ and $v=(z-\bar z)/(2i)$.
In particular, $f$ can be considered as 
a function of variables $z$ and $\bar z$,
and can be denoted by $f=f(z,\bar z)$.
Since $\iota(z)=1/\bar z$,
we can write
$
g(z,\bar z):=z \bar z f(1/\bar z,1/z).
$
Then 
$$
g_{zz}(z,\bar z)
=\frac{\bar z f_{\bar z \bar z}(1/\bar z,1/z)}{z^3}
$$
holds, where 
$$
\frac{\partial}{\partial z}:=
\frac12\left(\frac{\partial}{\partial u}
-i \frac{\partial}{\partial v} \right),\quad
\frac{\partial}{\partial \bar z}
:=\frac12 \left(\frac{\partial}{\partial u}
+i \frac{\partial}{\partial v}
 \right).
$$
Since $\Gamma(\theta)=re^{i\theta}$, we have that 
$$
g_{zz}(\Gamma(\theta))=\frac{f_{\bar z \bar z}
(\iota \circ \Gamma(\theta))}{r^{2}e^{4i\theta}}.
$$
Thus, it holds that
$$
\op{ind}_o(g_{zz},\Gamma)=-4+\op{ind}_o(f_{\bar z \bar z},\iota\circ \Gamma).
$$
By \eqref{eq:B1}, we have
\begin{align*}
\op{ind}_o(g_{zz},\Gamma)&=-2
\op{ind}_o(\mathcal H_{g}),\\
\op{ind}_o(f_{\bar z \bar z},\iota\circ \Gamma)
&=
-\op{ind}_o(f_{zz},\iota\circ \Gamma)=2
\op{ind}_\infty(\mathcal H_{f}).
\end{align*}
Thus we get the assertion.
\end{proof}

Applying Proposition \ref{prop:d}
for the function $g=\Lambda_m$
(cf.\eqref{eq:Lambda}), we get the 
following:

\begin{corollary}
For each $m(\ge 1)$,
there exists a $C^1$-function
$
f:\R^2\setminus \Omega_R\to \R
$
satisfying
\begin{enumerate}
\item $f$ is real-analytic on $\R^2\setminus \Omega_R$, 
\item the eigen-flow of the
Hessian matrix of $f$ has no
singular points,
and
\item the index at infinity of the eigen-flow of $H_f$
is equal to $1-m/2$.
\end{enumerate}
\end{corollary}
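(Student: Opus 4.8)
The plan is to derive the corollary by applying the duality formula of Proposition \ref{prop:d} to $g=\Lambda_m$, reading that relation ``backwards'' so as to manufacture the desired $f$ on the exterior. The first observation I would make is that the duality assignment $f\mapsto g$, written in complex form as $g(z,\bar z)=z\bar z\,f(1/\bar z,1/z)$, is an involution: substituting $z\mapsto 1/\bar w$ and simplifying gives back $f$, since $\iota$ is its own inverse. Consequently, if I define $f$ on $\R^2\setminus\Omega_R$ to be the dual of $\Lambda_m$, namely $f(u,v):=(u^2+v^2)\,\Lambda_m(\iota(u,v))$, then $\Lambda_m$ is precisely the dual of $f$, and Proposition \ref{prop:d} applies to the pair $(f,\Lambda_m)$.

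Next I would verify the hypotheses together with the first two listed properties. Real-analyticity of $f$ on $\R^2\setminus\Omega_R$ (property (1)) is immediate, because $\Lambda_m$ is real-analytic on $U_{1/R}\setminus\{o\}$ (established in Section 6), the inversion $\iota$ carries the exterior $\R^2\setminus\Omega_R$ analytically onto that punctured disk, and the factor $u^2+v^2$ is analytic and nonvanishing there; in particular $f$ is $C^1$. For the ``no equi-diagonal points'' condition (property (2), which is also the hypothesis of Proposition \ref{prop:d}), I would use that the equi-diagonal points of a Hessian are exactly the zeros of its complex second derivative, since $4f_{zz}=f_{uu}-f_{vv}-2if_{uv}$. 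The identity $g_{zz}(z,\bar z)=\bar z\,f_{\bar z\bar z}(1/\bar z,1/z)/z^{3}$ obtained in the proof of Proposition \ref{prop:d} then shows that the zeros of $f_{zz}$ on the exterior correspond, through $\iota$, to the zeros of $(\Lambda_m)_{zz}$ on the punctured disk. Since Section 6 shows that the origin is an \emph{isolated} singular point of the eigen-flow of the Hessian of $\Lambda_m$, I may choose $R$ large enough that the punctured disk $U_{1/R}\setminus\{o\}$ contains no equi-diagonal points at all; transporting this through $\iota$ gives that the Hessian of $f$ has no equi-diagonal points on $\R^2\setminus\Omega_R$, which is property (2).

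Finally I would read off property (3) directly from the duality formula. With the pair $(f,\Lambda_m)$ as above, Proposition \ref{prop:d} yields
\[
\op{ind}_o(\mathcal H_{\Lambda_m})+\op{ind}_\infty(\mathcal H_f)=2 .
\]
Section 6 computes $\op{ind}_o(\mathcal H_{\Lambda_m})=1+m/2$, whence
\[
\op{ind}_\infty(\mathcal H_f)=2-(1+m/2)=1-m/2,
\]
which is exactly property (3).

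The step I expect to require the most care is the ``no equi-diagonal points'' claim. The formula of Proposition \ref{prop:d} is phrased along a fixed circle $\Gamma$, so the cleanest route is to stay at the level of the zero sets of $f_{zz}$ versus $(\Lambda_m)_{zz}$ and to be explicit about the choice of $R$: one must confirm that the isolatedness of the origin for $\Lambda_m$ (which Section 6 verifies only for sufficiently small $r$) genuinely permits shrinking $U_{1/R}$ so that no equi-diagonal point other than $o$ survives, and hence that the \emph{entire} exterior $\R^2\setminus\Omega_R$ is free of them. The remaining bookkeeping---matching the radii $R$ and $1/R$ across the inversion and checking that $\op{ind}_\infty(\mathcal H_f)$ is independent of the chosen circle---is routine given the discussion preceding Proposition \ref{prop:d}.
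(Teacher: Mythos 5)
Your proposal is correct and follows essentially the same route as the paper: the paper's own (one-line) proof applies Proposition \ref{prop:d} with $g=\Lambda_m$, using Section 6's computation $\op{ind}_o(\mathcal H_{\Lambda_m})=1+m/2$ and the fact that the dual of $\Lambda_m$ (namely $f_m-1=\tanh(r^a\cos m\theta)$) serves as $f$. The details you supply---involutivity of the duality, transfer of the equi-diagonal-point condition through $g_{zz}(z,\bar z)=\bar z\,f_{\bar z\bar z}(1/\bar z,1/z)/z^3$, and enlarging $R$ so that no equi-diagonal points survive---are exactly the steps the paper leaves implicit.
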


The function $\Lambda_m$ 
used in the second proof of Theorem 1.1 
coincides with the dual of the function $f_m-1$
given in \eqref{eq:tanh}.

\appendix
\section{The classical reduction}
In this appendix we show the existence of
a special coordinate system $(\xi,\eta)$ of
the graph of a function $f(x,y)$ which reduces
the curvature line flow to the Hessian of a
certain function, called Ribaucour's 
parametrization
(the third author learned this 
from Konrad Voss at the conference of
Thessaloniki 1997).
Although, the existence of such a 
coordinate system
was classically known, and a proof
is in the appendix of \cite{S}, the authors
will give the proof here for the
sake of convenience. 
We set
$
P=(x,y,f(x,y)),
$
and suppose that
$
f(0,0)=f_x(0,0)=f_y(0,0)=0
$.
Consider a sphere which is
tangent to 
the graph of $f$ at  $P$
and also tangent to the $xy$-plane at 
a point $Q$.
Then, it holds that
\begin{equation}\label{eq:fund}
Q+\lambda \mb e_3=P+\lambda \nu,
\end{equation}
where $\mb e_3=(0,0,1)$ and
$
\nu=(f_x,f_y,-1)/\sqrt{1+f_x^2+f_y^2}.
$
Taking the third component of
\eqref{eq:fund}, 
we get
\begin{equation}\label{eq:lambda0}
\lambda=\frac{f\sqrt{1+f_x^2+f_y^2}}{1+\sqrt{1+f_x^2+f_y^2}}.
\end{equation}
In particular,
$\lambda(0,0)=0$.
Since $f_x(0,0)=f_y(0,0)=0$, 
we have that
\begin{equation}\label{eq2}
d\lambda(0,0)=df(0,0)=0.
\end{equation}
Taking the exterior derivative of
\eqref{eq:fund}, and using \eqref{eq2}
and $\lambda(0,0)=0$,
we have $dP(0,0)=dQ(0,0)$.
So, 
if we set $Q=(\xi(x,y),\eta(x,y),0)$,
then it holds that
\begin{align*}
&(\xi_x(0,0)dx+\xi_y(0,0)dy,\eta_x(0,0)dx+
\eta_y(0,0)dy,0)
=dQ \\
&\qquad =dP=(dx,dy,f_x(0,0) dx+f_y(0,0)dy)=(dx,dy,0),
\end{align*}
which implies that
the Jacobi matrix of the map
\begin{equation}\label{eq:Phi}
\Phi:(x,y)\mapsto (\xi(x,y),\eta(x,y))
\end{equation}
is the identity matrix at $(0,0)$.
So we can take $(\xi,\eta)$ as 
a new local coordinate system.
Differentiating \eqref{eq:fund} by 
$\xi$ and $\eta$,
we get the following two identities:
$$
Q_{\xi}+\lambda_{\xi} \mb e_3
=P_{\xi}+\lambda_\xi \nu+\lambda \nu_{\xi}, 
\quad
Q_{\eta}+\lambda_{\eta} \mb e_3
=P_{\eta}+\lambda_{\eta} \nu+\lambda \nu_{\eta}.
$$
Taking the inner products of them and $\nu$,
these two equations yield
\begin{equation}\label{eq:Q}
Q_{\xi}\cdot \nu+\lambda_{\xi} \nu_3 
=\lambda_{\xi},  \qquad
Q_{\eta}\cdot \nu+\lambda_{\eta} 
\nu_3 =\lambda_{\eta}, 
\end{equation}
where we set
$
\nu=(\nu_1,\nu_2,\nu_3).
$
Since $Q=(\xi,\eta,0)$, we have that
$Q_{\xi}=(1,0,0)$ and $Q_{\eta}=(0,1,0)$.
So it holds that
$Q_{\xi}\cdot \nu=\nu_1$ and $Q_{\eta}\cdot \nu=\nu_2$.
Substituting this into 
\eqref{eq:Q}, we have
\begin{equation}\label{eq:lambda}
\lambda_{\xi}=\frac{\nu_1}{1-\nu_3},\qquad
\lambda_{\eta}=\frac{\nu_2}{1-\nu_3}.
\end{equation}
This implies that $(\lambda_{\xi},\lambda_{\eta})$ 
is the image of $\nu$ via the stereographic projection,
and we can write
\begin{equation}\label{eq:N}
\nu=\frac1{1+\lambda_{\xi}^2
+\lambda_{\eta}^2}(2 \lambda_{\xi},
2\lambda_{\eta},
\lambda_{\xi}^2+\lambda_{\eta}^2-1).
\end{equation}
By \eqref{eq:fund}, we have
\begin{equation}\label{eq:P2}
P=(\xi,\eta,0)-\lambda\nu+(0,0,\lambda).
\end{equation}
We prove the following

\begin{fact}
The curvature line flow of
the graph $z=f(x,y)$ coincides
with the eigen-flow of the 
Hessian of the function $\lambda(\xi,\eta)$
given by \eqref{eq:lambda0}.
\end{fact}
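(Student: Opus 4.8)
The plan is to read off the Weingarten (shape) operator of the immersion $P(\xi,\eta)$ in \eqref{eq:P2} directly in terms of the Hessian of $\lambda$, and then to observe that the two matrices share eigendirections. Recall that a tangent direction is principal precisely when $dP$ and $d\nu$ are proportional along it; equivalently, writing $(\nu_\xi,\nu_\eta)$ as combinations of $(P_\xi,P_\eta)$, the principal directions are the eigenvectors of the resulting Weingarten matrix. So the first task is to express $P_\xi,P_\eta$ and $\nu_\xi,\nu_\eta$ in one convenient orthonormal frame of the tangent plane.

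First I would exploit the stereographic relation \eqref{eq:N}: writing $p:=\lambda_\xi$, $q:=\lambda_\eta$ and $W:=1+p^2+q^2$, the vector $\nu$ is the inverse stereographic image of $(p,q)$. Since stereographic projection is conformal, a short computation shows that $\partial\nu/\partial p$ and $\partial\nu/\partial q$ are mutually orthogonal, tangent to $S^2$ at $\nu$ (hence tangent to the surface, as $\nu^\perp=T_P$), and of common length $2/W$. Thus $e_p:=\tfrac{W}{2}\,\partial\nu/\partial p$ and $e_q:=\tfrac{W}{2}\,\partial\nu/\partial q$ form an orthonormal basis of the tangent plane, and the chain rule gives
\[
[\nu_\xi\ \nu_\eta]=[e_p\ e_q]\,\tfrac{2}{W}\,\op{Hess}\lambda,\qquad
\op{Hess}\lambda=\pmt{\lambda_{\xi\xi}&\lambda_{\xi\eta}\\ \lambda_{\xi\eta}&\lambda_{\eta\eta}}.
\]

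The crux is the analogous expression for $dP$. Differentiating \eqref{eq:P2} in the form $P=(\xi,\eta,\lambda)-\lambda\nu$ gives $P_\xi=(1,0,\lambda_\xi)-\lambda_\xi\nu-\lambda\nu_\xi$, and the key identity I expect to verify is the clean cancellation
\[
(1,0,\lambda_\xi)-\lambda_\xi\nu=e_p,\qquad (0,1,\lambda_\eta)-\lambda_\eta\nu=e_q,
\]
i.e. removing the $\nu$-component of $(1,0,\lambda_\xi)$ produces exactly $\tfrac{W}{2}\,\partial\nu/\partial p$ (the third components agree because $W-(p^2+q^2-1)=2$). Granting this, $P_\xi=e_p-\lambda\nu_\xi$ and $P_\eta=e_q-\lambda\nu_\eta$ yield
\[
[P_\xi\ P_\eta]=[e_p\ e_q]\Bigl(I-\tfrac{2\lambda}{W}\op{Hess}\lambda\Bigr).
\]

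Finally I would eliminate the frame to obtain $[\nu_\xi\ \nu_\eta]=[P_\xi\ P_\eta]\,A$ with $A=\bigl(I-\tfrac{2\lambda}{W}\op{Hess}\lambda\bigr)^{-1}\tfrac{2}{W}\op{Hess}\lambda$, so that the principal directions, as directions $\alpha\,\partial_\xi+\beta\,\partial_\eta$ in the $(\xi,\eta)$-plane, are exactly the eigenvectors of $A$. Since $I-\tfrac{2\lambda}{W}\op{Hess}\lambda$ is a polynomial in $\op{Hess}\lambda$, the matrix $A$ commutes with the symmetric matrix $\op{Hess}\lambda$ and hence preserves its eigenspaces; where the two Hessian eigenvalues are distinct these eigenspaces are lines, forcing the eigendirections of $A$ to coincide with those of $\op{Hess}\lambda$. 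Moreover $A$ is a scalar multiple of the identity exactly when $\op{Hess}\lambda$ is, so the umbilics of the graph correspond to the equi-diagonal points of $\op{Hess}\lambda$. Consequently, away from these singular points the curvature-line directions of $P(\xi,\eta)$ coincide with the eigendirections of $\op{Hess}\lambda$, which is the assertion. The only real obstacle is the bookkeeping of the key identity $(1,0,\lambda_\xi)-\lambda_\xi\nu=e_p$; once that cancellation is in place, the coincidence of eigendirections is immediate from the commuting relation.
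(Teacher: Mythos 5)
Your proposal is correct, and the computations it hinges on do check out: with $p=\lambda_\xi$, $q=\lambda_\eta$, $W=1+p^2+q^2$, one indeed finds $\partial\nu/\partial p\cdot\partial\nu/\partial q=0$, $|\partial\nu/\partial p|=|\partial\nu/\partial q|=2/W$, and the key cancellation $(1,0,\lambda_\xi)-\lambda_\xi\nu=\tfrac{W}{2}\,\partial\nu/\partial p$ holds (the third component works because $W-(p^2+q^2-1)=2$). But your route is genuinely different from the paper's. The paper never constructs a tangent frame or the Weingarten matrix: it uses the criterion that $\mb w$ is principal iff $\det\bigl(\nu,\,dP(\mb w),\,d\nu(\mb w)\bigr)=0$, and evaluates this determinant by row and column operations --- the row $dP$ may be replaced by $(d\xi,d\eta,d\lambda)$ because $d(\lambda\nu)=d\lambda\,\nu+\lambda\,d\nu$ is a combination of the other two rows, and the identity $\nu_3=\lambda_\xi\nu_1+\lambda_\eta\nu_2-1$ clears the last column --- arriving at $\mu\bigl((\lambda_{\xi\xi}-\lambda_{\eta\eta})\,d\xi\, d\eta-\lambda_{\xi\eta}(d\xi^2-d\eta^2)\bigr)$, whose null directions are visibly the eigendirections of the Hessian. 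Your argument instead computes the shape operator itself, $A=\bigl(I-\tfrac{2\lambda}{W}\op{Hess}\lambda\bigr)^{-1}\tfrac{2}{W}\op{Hess}\lambda$, in the orthonormal frame furnished by the conformality of stereographic projection. That costs more bookkeeping but buys more: it exhibits the Weingarten operator as a rational function of $\op{Hess}\lambda$, hence gives the principal curvatures as explicit functions of the Hessian eigenvalues, rather than merely the coincidence of the two flows; it also identifies umbilics with equi-diagonal points directly. Two small points you should make explicit to close the argument: the invertibility of $I-\tfrac{2\lambda}{W}\op{Hess}\lambda$ is not an extra hypothesis but follows from $P$ being an immersion (since $[e_p\ e_q]$ has rank two); and the claim that the eigendirections of $A$ are exactly those of $\op{Hess}\lambda$ (and that $A$ is scalar iff the Hessian is) needs the injectivity of $t\mapsto 2t/(W-2\lambda t)$, which holds because $W>0$. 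Neither is a gap, just a line each.
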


\begin{proof}
Noticing (A.8), we set
$$
\Delta_{(\xi,\eta)}:=\op{det}\pmt{\nu\\ dP\\ d\nu}
=\op{det}\pmt{\nu\\
d\xi,d\eta,d\lambda \\
d\nu}.
$$
Then this gives a map $\Delta_{(\xi,\eta)}:T_{(\xi,\eta)}\R^2\to \R$
such that
$$
\Delta_{(\xi,\eta)}\left(a\frac{\partial}{\partial \xi}
+b\frac{\partial}{\partial \eta}\right)
=\op{det}\biggl(\nu,
a P_{\xi}(\xi,\eta)+b 
P_{\eta}(\xi,\eta),
a \nu_{\xi}(\xi,\eta)+
b \nu_{\eta}(\xi,\eta)\biggr)\in \R.
$$
It is well-known that $\mb w\in T_{(\xi,\eta)}\R^2$
points in a principal direction of $P$
at $(\xi,\eta)$ if 
and only if $\Delta_{(\xi,\eta)}(\mb w)=0$.
Since
$(\nu_1)^2+(\nu_2)^2+(\nu_3)^2=1$,
\eqref{eq:lambda} yields that
$$
\lambda_{\xi} \nu_1+\lambda_{\eta} \nu_2
=\frac{(\nu_1)^2+(\nu_2)^2}{1-\nu_3}
=\frac{1-(\nu_3)^2}{1-\nu_3}=1+\nu_3,
$$
which implies
$
\nu_3=\lambda_{\xi} \nu_1+\lambda_{\eta} \nu_2-1.
$
We now set $\mu=2/(1+\lambda_{\xi}^2+\lambda_{\eta}^2)$.
Differentiating \eqref{eq:N}, we have
$$
d\nu=\frac{d\mu}{\mu}\nu+\mu (d\lambda_{\xi},
d\lambda_{\eta},
\lambda_{\xi}
d\lambda_{\xi}+\lambda_{\eta}d\lambda_{\eta}).
$$
The first term of the right hand-side of
the above equation is proportional to $\nu$
and does not affect
the computation of
$\Delta_{(\xi,\eta)}$.
So we have that
\begin{align*}
\Delta_{(\xi,\eta)}
&=\mu
\vmt{\nu_1 & \nu_2 & \lambda_\xi \nu_1+\lambda_\eta \nu_2-1 \\
  d\xi  &   d\eta &  \lambda_\xi d\xi+\lambda_\eta d\eta \\
  d\lambda_\xi & d\lambda_\eta &
 \lambda_\xi d\lambda_\xi+\lambda_\eta d\lambda_\eta} \\
&=
\mu
\vmt{\nu_1 & \nu_2 & -1 \\
         d\xi  &   d\eta &  0 \\
      d\lambda_\xi & d\lambda_\eta & 0}
=
-\mu\vmt{d\xi  &   d\eta  \\
      d\lambda_\xi & d\lambda_\eta} 
\\
&=\mu\biggl(
(\lambda_{\xi\xi}-\lambda_{\eta\eta})d\xi d\eta
-\lambda_{\xi \eta}(d\xi^2-d\eta^2)\biggr).
\end{align*}
Fact A.1 follows from this representation of $\Delta_{(\xi,\eta)}$.
\end{proof}

\section{Indices of eigen-flows of Hessian matrices}
Let
$
g:\Omega_R\setminus\{o\}\to \R
$
be a $C^2$-function, where
$\Omega_R$ is the closed disk 
of radius $R$ centered at the origin
 $o:=(0,0)$
(cf. \eqref{eq:Omega}).
The Hessian matrix of $g$ 
is given by 
$$
H_g:=\pmt{g_{xx} & g_{xy}\\ g_{yx} & g_{yy}}.
$$ 
We denote by $\mathcal H_g$ the
eigen-flow of $H_g$.
A point $p\in \Omega_R\setminus\{o\}$ 
is called an {\it equi-diagonal point}
of $\mathcal H_g$
if $H_g(p)$ is proportional to the identity matrix. 
Consider the circle
$$
\Gamma(\theta):=r (\cos \theta,\sin \theta)\qquad 
(0\le \theta<2\pi, r<R).
$$
If there are no equi-diagonal points on 
$\Omega_R\setminus\{o\}$, then
we can define the index 
$\op{ind}(\mathcal H_g,\Gamma)$
of the eigen-flow $\mathcal H_g$ 
with respect to $\Gamma$,
which does not depend on the choice of $r$.
We call it the index of $\mathcal H_g$ 
at the origin and denote it
by $\op{ind}_o(\mathcal H_g)$.
Consider the vector field
$$
d_g:=2g_{xy}\frac{\partial}{\partial x}+(g_{yy}-g_{xx})
\frac{\partial}{\partial y}.
$$
It is well-known that
the mapping degree of the Gauss map 
(cf. \eqref{eq:hat})
$$
\check d_g:
T^1:=\R/2\pi \Z\ni \theta 
\mapsto \frac{d_g(\Gamma(\theta))}{|d_g(\Gamma(\theta))|}\in
S^1:=\{(x,y)\in \R^2\,;\, x^2+y^2=1\}
$$
is equal to $2\op{ind}_o(\mathcal H_g)$.
Using the correspondence
$(x,y)\mapsto x+iy$,
we identify $\R^2$ with $\C$, 
where $i=\sqrt{-1}$.
Then
$$
g_{z}=\frac12(g_x-ig_y),\qquad
g_{zz}=\frac14((g_{xx}-g_{yy})-2 ig_{xy}),
$$
where $g_z:=\partial g/\partial z$,
$g_{zz}:=\partial^2 g/\partial z^2$
and 
$$
\frac{\partial}{\partial z}:=
\frac12\left(\frac{\partial}{\partial x}
-i
\frac{\partial}{\partial y}\right).
$$
Thus, $d_g$ can be identified with
the right-angle rotation of 
$\overline{g_{zz}}$.
In particular, we have
\begin{equation}\label{eq:B1}
\op{ind}_o(\mathcal H_g)
=-\frac12\op{ind}_o(g_{zz}).
\end{equation}
Here $g_{zz}$ is considered as a vector field
and $\op{ind}_o(g_{zz})$ is its index at the origin. 
Let $(r,\theta)$ be as in \eqref{eq:xy}.
Then $z=r e^{i\theta}$ and 
$$
g_{z}=\frac{e^{-i\theta}}{2r}(r g_r-ig_\theta),\quad
g_{zz}=
\frac{e^{-2i\theta}}{4r^2}
\biggl(
(r^2 g_{rr}-r g_r-g_{\theta\theta})+2i 
(g_\theta-r g_{r\theta})
\biggr).
$$ 
We consider the vector field defined by
\begin{equation}\label{eq:Delta2}
\delta_g:= 
2(r g_{r\theta}-g_\theta)\frac{\partial}{\partial x}
+(-r^2g_{rr}+r g_r-g_{\theta\theta})
\frac{\partial}{\partial y}.
\end{equation}
Since
$$
\op{ind}_o(\overline{g_{zz}})=2 +\op{ind}_o(\delta_g),
$$
we obtain the following:

\begin{lemma}\label{fact:k2}
The identity
$\op{ind}_o(\mathcal H_g)=1+\op{ind}_o(\delta_g)/2$
holds.
\end{lemma}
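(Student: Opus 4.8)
The plan is to read $\op{ind}_o(\mathcal H_g)$ directly off the polar expression for $g_{zz}$ recorded above, so that no genuinely new computation is needed; the entire task reduces to a careful accounting of orientations. Throughout, the standing hypothesis that $H_g$ has no equi-diagonal points guarantees that $g_{zz}\neq 0$ along $\Gamma$, so each index below is a well-defined winding number, independent of the radius $r$.

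First I would invoke \eqref{eq:B1}, which gives $\op{ind}_o(\mathcal H_g)=-\frac12\op{ind}_o(g_{zz})$. Because complex conjugation $w\mapsto\overline w$ is an orientation-reversing isometry of the target $\C\cong\R^2$, it reverses winding numbers, so $\op{ind}_o(g_{zz})=-\op{ind}_o(\overline{g_{zz}})$ and hence $\op{ind}_o(\mathcal H_g)=\frac12\op{ind}_o(\overline{g_{zz}})$. It therefore suffices to prove the relation $\op{ind}_o(\overline{g_{zz}})=2+\op{ind}_o(\delta_g)$ stated just before the lemma.

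To establish that relation, conjugate the polar formula for $g_{zz}$. The result is $e^{2i\theta}/(4r^2)$ times a $\C$-valued function which, under $\C\cong\R^2$, is the right-angle rotation of the vector field $\delta_g$ of \eqref{eq:Delta2}. The positive scalar $1/(4r^2)$ does not affect the winding number, and the rotation is orientation preserving, so the residual factor contributes precisely $\op{ind}_o(\delta_g)$. Since the winding number of a product of two maps into $S^1$ is the sum of their winding numbers, and since $\theta\mapsto e^{2i\theta}$ winds twice as $\Gamma$ is traversed once counterclockwise, we obtain $\op{ind}_o(\overline{g_{zz}})=2+\op{ind}_o(\delta_g)$. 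Combining with the previous paragraph yields $\op{ind}_o(\mathcal H_g)=\frac12\bigl(2+\op{ind}_o(\delta_g)\bigr)=1+\frac12\op{ind}_o(\delta_g)$, as claimed.

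I expect the only delicate point to be the sign bookkeeping, where three orientation effects must be reconciled at once: the sign reversal induced by conjugation, the fact that the $e^{2i\theta}$ factor contributes $+2$ rather than $-2$ (which is where the choice that $\Gamma$ winds positively is used), and the confirmation that the residual $\C$-valued factor is a genuine rotation of $\delta_g$ rather than a reflection. Once the positive orientation of $\Gamma$ is fixed, each of these signs is forced, and the half-integer shift $1+\frac12\op{ind}_o(\delta_g)$ emerges unambiguously.
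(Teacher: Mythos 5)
Your proof is correct and is essentially the paper's own argument: the paper likewise derives the lemma by combining \eqref{eq:B1} with the relation $\op{ind}_o(\overline{g_{zz}})=2+\op{ind}_o(\delta_g)$ read off from the polar expression for $g_{zz}$, and you have merely made explicit the winding-number bookkeeping (conjugation reverses degree, the factor $e^{2i\theta}$ contributes $2$, and degrees add under multiplication of $S^1$-valued maps) that the paper leaves implicit. One caveat that your promised ``confirmation'' step would uncover: the residual factor of $\overline{g_{zz}}$ is the right-angle rotation of the vector field $\delta$ as defined in \eqref{eq:id1} (with $+g_{\theta\theta}$), not of \eqref{eq:Delta2} as printed (with $-g_{\theta\theta}$); this sign discrepancy is a typo in the paper's Appendix~B rather than a gap in your argument.
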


\begin{ack}
The third author thanks Udo Hertrich-Jeromin
for fruitful conversations.
The authors thank Wayne Rossman for
valuable comments.
\end{ack}


\begin{thebibliography}{GMS}
\bibitem[A]{A3}
N. Ando,
{\it An umbilical point on a non-real-analytic surface},
Hiroshima Math. J., {\bf 33} (2003), 1--14.
%
\bibitem[B]{B}
Larry BatesC
{\it A weak counterexample to the Carath\'eodory 
conjecture}, Differential Geometry and its
Applications {\bf 15} (2001), 79-80D

\bibitem[GH]{GH}
M. Ghomi and R. Howard,
{\it Normal curvatures of asymptotically constant
graphs and Carath\'eodory's conjecture}, 
Proc. Amer. Math. Soc.
{\bf 140} (2012), 4323--4335.

\bibitem[GK1]{GK1}
B.~Guilfoyle, W.~Klingenberg,
{\it Proof of the Caratheodory Conjecture},
preprint (arXiv:0808.0851).

\bibitem[GK2]{GK2}
B.~Guilfoyle, W.~Klingenberg,
{\it From Global to Local: an index bound 
for umbilic points on smooth convex surfaces},
preprint (arXiv:1207.5994).

\bibitem[GMS]{GMS}
C. Gutierrez, F. Mercuri, and F. S\'anchez-Bringas,
{\it On a conjecture of Carath\'eodory: analyticity 
versus smoothness},
Experiment. Math., {\bf 5} (1996), 33--37.

\bibitem[K]{Klotz}
T. Klotz-Milnor, {\it On G. Bol's proof 
of Carath\'eodory's conjecture}, 
Comm. Pure Appl. Math. 12 (1959) 277--311.

\bibitem[S]{S}
H. Scherbel, {\it A new proof of Hamburger's Index Theorem on 
umbilical points}, ProQuest LLC,
Ann Arbor, MI, (1993) Thesis (Dr.Sc.Math)
Eidgenoessische Technische Hochschule Zuerich
(Switzerland).

\bibitem[SX]{SX1}
B. Smyth and F. Xavier,
{\it A sharp geometric estimate for the index of an umbilic 
on a smooth surface},
Bull. London Math. Soc., {\bf 24} (1992), 176--180.
\end{thebibliography}
\end{document}